\providecommand{\ed}{\mathrm e}
\providecommand{\Fl}{\mathrm{Fl}}
\providecommand{\Gl}{\mathrm{GL}}
\numberwithin{equation}{section}
\newtheorem{theorem}{Theorem}[section]
\newtheorem{proposition}[theorem]{Proposition}
\newtheorem{corollary}[theorem]{Corollary}
\newtheorem{lemma}[theorem]{Lemma}
\newtheorem{definition}[theorem]{Definition}
\newtheorem{remark}[theorem]{Remark}
\title{Enumeration of crossings in two-step puzzles}
\author
{Quentin François}
\address
{Quentin François: CEREMADE, CNRS, Université Paris-Dauphine, Université PSL, 75016 Paris, France
\& DMA, École normale supérieure, Université PSL, CNRS, 75005 Paris, France.}
\email{\href{mailto:quentin.francois@dauphine.psl.eu}{quentin.francois@dauphine.psl.eu}}
\begin{document}

\begin{abstract}
    We prove a formula which gives the number of occurrences of certain labels and local configurations inside two-step puzzles 
    introduced by Buch, Kresch, Purbhoo and Tamvakis from the work of Knutson.
    Puzzles are tilings of the triangular lattice by edge labeled tiles
    and are known to compute the Schubert structure constants of the cohomology of two-step flag varieties. 
    The formula that we obtain depends only on the boundary conditions of the puzzle. 
    The proof is based on the study of color maps which are tilings of the triangular lattice by edge labeled tiles 
    obtained from puzzles.
\end{abstract}

\maketitle


\section{Introduction}

In his article \cite{Knutson_conjecture}, Knutson conjectured that the structure constants of the cohomology ring 
of a partial flag variety $\Gl(n) / P$ 
can be computed by the number of tilings of the triangular lattice called puzzles using specific tiles with side labels. The conjecture was first proved in 
the case of the Grassmannian variety \cite{Knutson_Tao_1}, \cite{Knutson_Tao_2}. 
The puzzle rule was extended in \cite{BKT_gromov_witten_inv_on_grass} to compute the Gromov-Witten invariants which are structure constants 
for the small quantum cohomology ring of the Grassmannians. 
Gromov-Witten invariants are particular instances of structure constants of the two-step variety $\Fl(a, b, n)$, see the work of Buch \cite{Buch_2003}
using the kernel and span of rational curves. 
The puzzle conjecture for the two-step flag variety was eventually proved in \cite{puzzle_conj_two_step}. 
 The proof is based on an analogue of the jeu de taquin algorithm where local configurations are propagated in puzzles according to specific rules.  
Another combinatorial expression of the structure coefficients for the two-step flag variety had been proved 
by Coskun \cite{Coskun_Mondrian} using diagrams called Mondrian tableaux. 
The extension of the puzzle rule to equivariant Schubert structure constants for the two-step flag variety 
was conjectured by Coskun and Vakil \cite{Coskun_Vakil} 
and proved by Buch  \cite{Buch_mutations}. In the latter, the author introduced transformations on 
equivariant two-step puzzles called mutations which in particular encompass the local rules of \cite{puzzle_conj_two_step}.

Edge labels on boundaries of two-step puzzles \cite{puzzle_conj_two_step} can be either $0$, $1$ or $2$. 
At the scale of the whole puzzle, the labels $0$ and $1$ create lines starting from the boundaries 
and crossing each other inside the puzzle. There are two possible types of crossings up to rotations. 
In one type of crossing, the lines joining identical labels from both sides cross each other by keeping their direction constant which 
is encoded in the puzzle by the label $7$ inside the configuration. In the other type of crossing, the line joining labels $0$ may not 
keep its direction constant which is encoded by the presence of at least one label $3$ in the configuration.  
 In \cite{françois2024positiveformulaproductconjugacy}, inspired from the hive model by Knutson and Tao in \cite{Knutson_Tao_1} and \cite{Knutson_Tao_2},
Tarrago and the author constructed a bijection between two-step puzzles and objects called dual two-colored hives which involve tilings
of the triangular lattice called color maps together with edge labels satisfying inequality and equality conditions. 
The bijection converts labels $7$ in two-step puzzles to edges of color $m$ in color maps. Moreover, the number of crossings of the second type 
is equal to the number of edges of color $3$ in the color map.

The present paper gives a formula for the number of crossings of each type that is, for both the number of labels $7$ and the number of crossings
of the second type in any two-step puzzle of $\Fl(a, b, n)$. The formula depends only on the $012$ labels on the puzzle boundary. 
This paper is organised as follows. In Section \ref{sec:main_result} we give the necessary definitions to state the main result. 
The latter is first expressed in terms of color maps in Theorem \ref{th:nb_hard_crossings} which translates to crossings 
in two-step puzzles in Corollary \ref{cor:nb_cross_puzzle}. 
Section \ref{sec:arrows} recalls some definitions of local configurations in color maps useful for the rest of the paper. 
Section \ref{sec:g_zero_case} proves the main identity in a special case where the boundaries of the color map are in a simple form. 
Section \ref{sec:general_case} starts with local propagations of configurations in color maps called 
gashes which are directly inspired from \cite{Buch_mutations} and proves the main identity by induction using propagations.

\subsection*{Acknowledgement} The author is supported by the Agence Nationale de la Recherche funding 
CORTIPOM ANR-21-CE40-0019. We thank Pierre Tarrago and Anders Skovsted Buch 
for the fruitful discussions on puzzles and related notions.

\section{Main result}
\label{sec:main_result}

\begin{definition}[Triangular lattice]
    Let $n \geq 1$ and let $\xi = \ed^{\frac{i\pi}{3}}$. Denote $T_n = \{ r + s \xi, 0 \leq r + s \leq n \} $ 
    the vertices of the triangular lattice of size $n$ and $E_n = \{ (x, x+v): x,x+v \in T_n \text{ and } v \in \{ -\xi^{2l}, 0 \leq l \leq 2 \} \}$ 
    the set of edges in $T_n$. The faces of the lattice $T_n$ are triangles which are called direct (respectively reversed) 
    if the corresponding vertices $(x_1, x_2, x_3) \in T_n^3$ can be labeled in such a way that $x_2 - x_1 = (1,0)$ and 
    $x_3 - x_1 = \xi$ (respectively $x_3 - x_1 = \overline{\xi}$). 
\end{definition}

\noindent
Edges in $E_n$ can only have three possible orientations. If $x = r + s \xi \in T_n $, we define three coordinates $(x_0, x_1, x_2)$ by
    \begin{equation*}
        x_0 = n - (r+s), \ x_1 = r \text{ and } x_2 = s.
    \end{equation*}
    
\begin{definition}[Edge coordinate and type]
    We say that an edge $e = (x, x+v)$ is of type $l$ for $l \in \{ 0,1,2 \}$ when $v =  -\xi^{2l}$. 
    The origin of $e$ is $x$ and the coordinates of $e$ is the triple $(e_0, e_1, e_2) = (x_0, x_1, x_2)$. 
    The height of $e$ of type $l$ is $h(e) = e_l$. Define also the boundary edges of $E_n$ by
    \begin{align*}
        \partial_0^{(n)} &=  \left( ((n-r+1,0), (n-r,0)), 1 \leq r \leq n \right) \\
        \partial_1^{(n)} &=  \left( ( n\xi + (r-1) \overline{\xi}, (n\xi + r \overline{\xi}) ) , 1 \leq r \leq n \right) \\
        \partial_2^{(n)} &=  \left( ((r-1) \overline{\xi},  r \overline{\xi}) , 1 \leq r \leq n \right). 
    \end{align*}
\end{definition}

\begin{definition}[Color map]
    Let $n \geq 1$. A color map is a map $C: E_n \rightarrow \{ 0, 1 ,3, m \}$ such that the boundary colors around each 
    triangular face in the clockwise order is either $(0, 0, 0)$, $(1, 1, 1)$, $(1, 0, 3)$ or $(0, 1, m)$ up to a cyclic rotation. 
\end{definition}

\noindent
The values of a color map $C$ on the boundary edges are denoted $ \partial C = (\partial_0 C, \partial_1 C, \partial_2 C)$ and are defined 
for $l \in \{ 0,1,2 \}$ as $\partial_l C = C_{\vert \partial_l^{(n)}}$. We say that $C$ has boundary condition 
$ \partial = (\partial_0, \partial_1, \partial_2)$ if $\partial C = \partial$. \\
\noindent
Alternatively, one can view a color map $C$ as a tiling of $T_n$ by the set of edge labeled tiles of 
Figure \ref{fig:pieces_dual_hive} where tiles can be rotated. 
The last two tiles are respectively called $3$ and $m$ lozenges in accordance with the color of their middle edge.

\begin{figure}[H]
    \centering
    \includegraphics[scale=0.8]{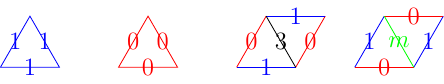}
    \caption{Possible tiles for color maps}
    \label{fig:pieces_dual_hive}
\end{figure}

\noindent
As there is an equal number of both $0$ and $1$ labels on each side of two-step puzzles, we will consider boundary conditions 
$\partial C \in \{ 0, 1 \}^{3n}$ having an equal number of $0$ and $1$ colored edges 
respectively denoted by $n_0$ and $n_1$ so that $n_0 + n_1 = n$, see Figure \ref{fig:ex_color_map} below. 
Such boundary conditions correspond to those of two-step puzzles \cite{puzzle_conj_two_step} where one 
removed the labels $2$ from the boundary $012$ strings.

\begin{figure}[H]
    \centering
    \includegraphics[scale=1]{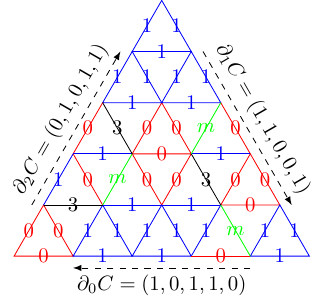}
    \caption{A color map on $E_5$ with boundary condition $ \partial C = ((1,0,1,1,0), (1,1,0,0,1), (0,1,0,1,1))$.}
    \label{fig:ex_color_map}
\end{figure}

\begin{definition}[Gash numbers]
\label{def:gash_number}
    Let $C: E_n \rightarrow \{ 0,1,3,m \}$ be a color map. For any $l \in \{ 0,1,2 \} $ and edge $e \in \partial_l^{(n)}$ denote by 
    $ n(C, e) = | \{ e' \in \partial_l^{(n)} : h(e') < h(e) \text{ and } C(e') = 1 \}|$ the number of $1$ colored edges east 
    (respectively north, south) to $e$ if $e \in \partial_0^{(n)}$ (respectively $e \in \partial_1^{(n)}$, $e \in \partial_2^{(n)}$). 
    The gash numbers of the color map $C$ are defined for $l \in \{ 0,1,2 \} $ as
    \begin{equation}
        G(C, l) = \sum_{e \in \partial_l^{(n)}: C(e)=0} n(C, e).
    \end{equation}
\end{definition}

\noindent
For instance, in the color map $C$ of Figure \ref{fig:ex_color_map}, one has $G(C, 0) = 4$, $G(C, 1) = 4$, $G(C, 2) = 1$. 
The main result of this paper is the following count of the number of $3$ and $m$ colored edges in color maps 
which depends only on the gash numbers. 

\begin{theorem}[Label count in color maps]
\label{th:nb_hard_crossings}
    Let $C$ be a color map on $E_n$ having $n_0$, respectively $n_1$, edges of color $0$, respectively $1$, on each of its boundaries.
     Let $m(C)$ and $s(C)$ denote respectively the number of $m$ and $3$ colored edges in $C$. Then, 
    \begin{equation}
    \label{eq:hc_count_general}
        m(C) = G(C, 0) + G(C, 1) + G(C, 2) - n_0 n_1
    \end{equation}
    and
    \begin{equation}
    \label{eq:sc_count_general}
        s(C) = 2n_0 n_1 - G(C, 0) - G(C, 1) - G(C, 2).
    \end{equation}
\end{theorem}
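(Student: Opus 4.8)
The plan is to prove the two identities of Theorem~\ref{th:nb_hard_crossings} simultaneously, by induction on a measure of ``disorder'' of the boundary condition $\partial C$, with the base case supplied by the simple boundary form treated in Section~\ref{sec:g_zero_case} and the inductive step powered by the gash propagations of Section~\ref{sec:general_case} (themselves modelled on the mutations of \cite{Buch_mutations}). One could instead try a direct argument that traces the $0$- and $1$-lines through $C$ and counts their crossings, but the line structure of color maps (unlike that of puzzles) routes awkwardly through monochromatic faces, so I would favour the inductive route.

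Before starting the induction I would record two elementary reductions. First, a $3$-lozenge and an $m$-lozenge each occupy two triangular faces of $T_n$, whereas the faces $(0,0,0)$ and $(1,1,1)$ are single triangles, and $T_n$ has exactly $n^2$ faces; hence
\begin{equation*}
n^2 = \#\{(0,0,0)\text{-faces}\} + \#\{(1,1,1)\text{-faces}\} + 2\bigl(m(C)+s(C)\bigr),
\end{equation*}
so \eqref{eq:hc_count_general} and \eqref{eq:sc_count_general} are together equivalent to the single crossing count $m(C)+s(C)=n_0n_1$ (equivalently $\#\{(0,0,0)\text{-faces}\}+\#\{(1,1,1)\text{-faces}\}=n_0^2+n_1^2$) plus either one of the two displayed formulas. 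Second, if one reads the word $\partial_l C\in\{0,1\}^n$ in order of increasing height of the edges of $\partial_l^{(n)}$, then by Definition~\ref{def:gash_number} the quantity $G(C,l)$ is precisely the number of pairs consisting of a $1$ appearing before a $0$ in that word; in particular $n_0n_1-G(C,l)$ counts the pairs consisting of a $0$ before a $1$, and $\sum_l G(C,l)$ depends only on $\partial C$. Thus the real content of the theorem is the assertion that $m(C)$ and $s(C)$ are themselves functions of $\partial C$, together with the evaluation of those functions.

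For the base case I would take the color maps whose boundary is in the normal form of Section~\ref{sec:g_zero_case}; for such boundaries one expects the color map either to be rigid or to admit an explicit description (a solid $0$-triangle, a solid $1$-triangle, and a parallelogram region tiled by lozenges of a single type), after which both formulas are a direct count. For the inductive step, a gash is a local move performed near a chosen side of the boundary: it swaps an adjacent out-of-order pair $10$ (or $01$) in the boundary word of that side and propagates the resulting discrepancy inward as a chain of lozenge flips and relabellings, terminating in a new, valid color map $C'$ whose boundary is strictly less disordered. One then checks that such a move changes $m(C)$ and $s(C)$ by controlled amounts whose net effect matches the change in $G(C,0)+G(C,1)+G(C,2)-n_0n_1$ produced by the boundary swap, so that validity of the formula for $C'$ transfers back to $C$; the induction closes because the disorder measure strictly decreases and the base family is reached.

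The main obstacle is the inductive step: exactly as in \cite{Buch_mutations}, the delicate part is to set up the gash propagation so that it is well defined, always terminates and outputs a genuine color map, and then to track precisely how many $m$- and $3$-colored edges are created or destroyed along the propagated chain, since it is this bookkeeping that must be matched against the change in $\sum_l G(C,l)$. A minor additional point is that the base family cannot consist of the boundaries with $G(C,l)=0$ for every $l$: for those, formula~\eqref{eq:hc_count_general} would read $m(C)=-n_0n_1$, so they simply do not occur when $n_0n_1>0$, and Section~\ref{sec:g_zero_case} must instead work with a particular feasible normal form on which the two sides of each identity can be compared directly.
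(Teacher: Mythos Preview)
Your overall strategy coincides with the paper's: the base case is the family $G(C,2)=0$ handled in Section~\ref{sec:g_zero_case}, and the inductive step is a gash propagation that lowers $G(C,2)$ by one while the bookkeeping on $m(C),s(C)$ matches the change in $\sum_l G(C,l)$. Two points deserve correction, however.

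First, your description of the base case is too optimistic. Color maps with $G(C,2)=0$ are neither rigid nor of the shape ``solid $0$-triangle, solid $1$-triangle, parallelogram of lozenges of a single type''. The paper needs an additional reduction within this class (Proposition~\ref{prop:reduction_partial_0}, itself based on Lemmas~\ref{lem:lozenge_filling}--\ref{lem:grouping_col}) to reach the \emph{reduced} color maps, and even those have a region $R$ tiled by a mixture of $3$- and $m$-lozenges whose precise mix depends on $\partial_1 C$; the count is carried out by encoding $R$ as a family of non-intersecting lattice paths (Proposition~\ref{prop:nb_m_reduced}), giving $m(C')=G(C',1)$ and $s(C')=n_0n_1-G(C',1)$. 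So the base case is not a direct count but a two-stage argument in its own right.

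Second, be careful with your induction variable. A gash launched from $\partial_2^{(n)}$ can terminate on $\partial_1^{(n)}$ rather than in configurations $(v)$ or $(vi)$; in that event $G(C',2)=G(C,2)-1$ but $G(C',1)=G(C,1)+1$, so $\sum_l G(C,l)$ does \emph{not} decrease and an induction on total disorder would stall. The paper inducts specifically on $G(C,2)$, which strictly decreases in every case, and then checks separately that in the boundary-hitting case $m$ and $s$ are unchanged while in the interior-removal case $m$ drops by one and $s$ rises by one.
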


\noindent
Let us also mention that one can count other types of tiles in $C$. 
The enumeration of triangular faces having edges of the same color is given in Corollary \ref{cor:triangular_faces_count}. \\
\\
In \cite[Theorem 5.3]{françois2024positiveformulaproductconjugacy} a bijection was defined between two-step puzzles of \cite{puzzle_conj_two_step}
and objects called two colored dual hives consisting of a color map together with a label map, 
see \cite[Definition 5.1]{françois2024positiveformulaproductconjugacy} for details on the definition. 
In particular, this bijection converts labels $7$ of two-step puzzles into to $m$ colored edges. 
Moreover the number of edges $e \in E_n$ with color $C(e)=3$ in the obtained color map  
is equal to the number of pieces of the form of Figure \ref{fig:soft_crossing} where the number of labels $2$ is arbitrary and where 
the configuration can be rotated. This piece is one of the composed puzzle pieces of \cite{puzzle_conj_two_step} which we
call a \textit{soft crossing} in the rest of this paper.
\begin{figure}[ht]
    \centering
    \includegraphics[scale=1]{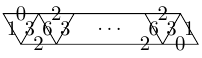}
    \caption{A soft crossing in two step puzzles.}
    \label{fig:soft_crossing}
\end{figure}

Recall that the clockwise labels on boundaries of two-step puzzles are $012$ strings.
 Let $u$ be a $012$ string of length $n$: $u = u_1 \dots u_n$ where $u_i \in \{ 0, 1, 2 \}, 1 \leq i \leq n$.
  In accordance with Definition \ref{def:gash_number}, define 
\begin{equation*}
    G(u) = \sum_{1 \leq i \leq n: u_i = 0} |\{ j \leq i: u_j = 1 \}|.
\end{equation*}
Theorem \ref{th:nb_hard_crossings} yields a direct computation of the number of labels $7$ and soft crossings
 in any two-step puzzle given in Corollary \ref{cor:nb_cross_puzzle}.

\begin{corollary}[Labels $7$ and soft crossings in two-step puzzles]
\label{cor:nb_cross_puzzle}
    Let $P$ be a two-step puzzle with boundary given by three $012$ strings $u, v, w$ respectively on the left, right and bottom sides in clockwise order,
    each having $n_0$ symbol $0$ and $n_1$ symbol $1$. 
    Let $n(P, \mathrm{sc})$ and $n(P, 7)$ denote respectively the number of soft crossings and labels $7$ in $P$. Then,
    \begin{align*}
        n(P, 7) &= G(u) + G(v) + G(w) - n_0n_1, \\
        n(P, \mathrm{sc}) &= 2n_0n_1 - G(u) - G(v) - G(w).
    \end{align*}
\end{corollary}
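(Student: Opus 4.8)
The plan is to deduce Corollary \ref{cor:nb_cross_puzzle} from Theorem \ref{th:nb_hard_crossings} by transporting both statements across the bijection of \cite[Theorem 5.3]{françois2024positiveformulaproductconjugacy}. Given a two-step puzzle $P$ with boundary $012$ strings $u, v, w$, this bijection produces a two colored dual hive whose underlying color map $C$ lives on $E_{n_0+n_1}$, and whose boundary condition $\partial C$ is obtained from $u, v, w$ by erasing all the symbols $2$. As recalled in the paragraph preceding the corollary, the bijection sends labels $7$ of $P$ to $m$-colored edges of $C$ and soft crossings of $P$ to $3$-colored edges of $C$; hence $n(P, 7) = m(C)$ and $n(P, \mathrm{sc}) = s(C)$. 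It therefore suffices to check that the gash numbers of $C$ recover $G(u)$, $G(v)$, $G(w)$ and then to substitute into \eqref{eq:hc_count_general} and \eqref{eq:sc_count_general} with $n$ taken to be $n_0 + n_1$.

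The key elementary point is that $G$ is insensitive to the symbols $2$: writing $\tilde u$ for the $\{0,1\}$-string obtained from a $012$-string $u$ by deleting every $2$, one has $G(u) = G(\tilde u)$. Indeed a symbol $0$ at position $i$ in $u$ corresponds to a symbol $0$ at some position $i'$ in $\tilde u$, and the number of $1$'s among $u_1, \dots, u_i$ equals the number of $1$'s among $\tilde u_1, \dots, \tilde u_{i'}$ since the deleted $2$'s contribute to neither count; summing over the $0$'s gives the claim. Consequently $\tilde u, \tilde v, \tilde w$ are $\{0,1\}$-strings of length $n_0 + n_1$, each with $n_0$ zeros and $n_1$ ones, and after aligning the clockwise reading of the puzzle sides with the ordering of $\partial_l^{(n)}$ by edge height used in Definition \ref{def:gash_number}, they are precisely $\partial_0 C, \partial_1 C, \partial_2 C$. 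Unwinding the definitions of $n(C, e)$ and $G(C, l)$ then shows that each $G(C, l)$ equals $G$ of the string assigned to side $l$, so that $G(C, 0) + G(C, 1) + G(C, 2) = G(u) + G(v) + G(w)$.

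Putting these together, $n(P, 7) = m(C) = G(u) + G(v) + G(w) - n_0 n_1$ by \eqref{eq:hc_count_general}, and $n(P, \mathrm{sc}) = s(C) = 2 n_0 n_1 - G(u) - G(v) - G(w)$ by \eqref{eq:sc_count_general}, which are the two asserted formulas. The only genuine difficulty is the orientation bookkeeping in the second paragraph: one must verify that the bijection of \cite{françois2024positiveformulaproductconjugacy}, stated with its own conventions for which puzzle side maps to which side of the hive and in which direction, is compatible with the convention of Definition \ref{def:gash_number} whereby $n(C, e)$ counts $1$-colored edges of strictly smaller height. Once the three sides and their orientations are matched, no further computation is required.
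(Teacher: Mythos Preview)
Your proposal is correct and follows essentially the same route as the paper: pass through the bijection to a color map $C$, identify $n(P,7)=m(C)$ and $n(P,\mathrm{sc})=s(C)$, match the gash numbers with $G(u),G(v),G(w)$, and invoke Theorem~\ref{th:nb_hard_crossings}. The paper's proof is terser and commits to the explicit side assignment $G(C,0)=G(w)$, $G(C,1)=G(v)$, $G(C,2)=G(u)$, whereas you hedge on the orientation bookkeeping and only assert the equality of sums; since only the sum enters the formulas, your caution costs nothing, and your explicit remark that $G$ ignores the symbols $2$ is a helpful detail the paper leaves implicit.
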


\begin{proof}
    Let $C$ be the color map associated to the image of $P$ by the bijection from \cite[Definition 5.2]{françois2024positiveformulaproductconjugacy}. 
    Then, $G(C, 0) = G(w)$, $G(C, 1) = G(v)$ and $G(C, 2) = G(u)$. Moreover, $n(P, \mathrm{sc}) = s(C)$ and $n(7, P) = m(C)$ 
    from which one derives the result using \eqref{eq:hc_count_general} and \eqref{eq:sc_count_general}.  
\end{proof}

\noindent
\textbf{Sketch of the proof of Theorem \ref{th:nb_hard_crossings}.}
In Section \ref{sec:arrows}, we introduce some transformations on color maps that will play a role in the rest of the paper. 
In Section \ref{sec:g_zero_case}, we prove Theorem \ref{th:nb_hard_crossings} in the case where $G(C, 2) = 0$. 
This is done by showing that when $G(C, 2) = 0$, the color map can be reduced to a simple color map in which the counting is explicit. 
In Section \ref{sec:general_case}, we give a procedure to transform any color map $C$ to another color map $C'$ such that 
$G(C', 2) = G(C, 2) - 1$ from which one can prove Theorem \ref{th:nb_hard_crossings} by induction. 

\section{Arrows}
\label{sec:arrows}

In this section, we recall some definitions on local configurations that were introduced in \cite{françois2024positiveformulaproductconjugacy}. 

\begin{definition}[Opening]

    Let $x \in T_n$. An opening of type $l \in \{ 0,1,2 \}$ at $x$ is a pair of edges $(e, e') \in E_n^2$ such that 
    if $e = (e_1, e_2)$, $e' = (e_1', e_2')$ with $(e_1, e_1', e_2, e_2') \in T_n^4$ and $t(e), t(e')$ are the types of $e$ and $e'$,
    \begin{align*}
        e_i &= e_i' = x \text{ for some } i \in \{ 1, 2 \}, \\
        \{ t(e), t(e') \} &= \{ l-1, l+1 \} \text{ and } C(e) = C(e') \in \{ 0,1 \}. 
    \end{align*}
    The color of the opening is defined as the color of edges $e$ and $e'$.
\end{definition}

\noindent
Consider an opening $a = (e, e')$ at $x$ of type $l$ and color $c \in \{ 0,1 \}$. 
Let $e'' = e''(a)$ be the edge such that $e, e'$ are edges of the lozenge with middle edge $e''$. 
The only possible colors of the edge $e''$ are $C(e'') \in \{ 0,1 \}$. If $C(e'') = c$, the two triangular faces of the lozenge with middle edge $e''$ 
have all of their edges colored $c$. If $C(e'') \neq c$, then there is an opening $a'$ of type $l$ and color $c$ at the other endpoint of $e''$. 
Note that there can only be finitely many such openings before $C(e'') = c$.

\begin{definition}[Arrow]
    Let $a = (e, e')$ be an opening of type $l$ and color $c$. 
    Let $ r \geq 0$ be the number of successive openings having middle edge $e''$ such that $C(e'') \neq c$ with $C(e'') \in \{ 0,1 \}$ 
    as in the previous paragraph. We say that the configuration of edges consisting of the $r \geq 0$ successive pairs of $3$ and $m$ 
    lozenges together with the pair of direct and reverse faces with boundary edges of color $c$ is an arrow of length $r \geq 0$ at the opening $a$. 
\end{definition}

\noindent
See Figure \ref{fig:arrows} for examples of openings and arrows.
\begin{figure}[H]
    \centering
    \includegraphics[scale=0.7]{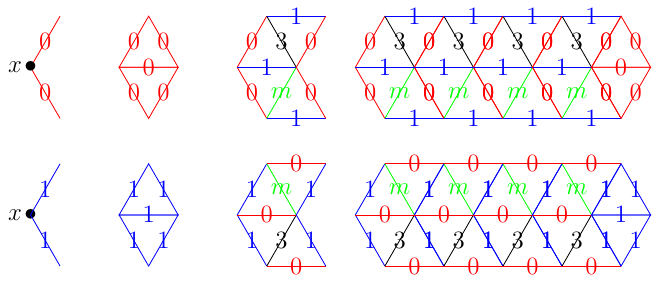}
    \caption{First row from left to right : an opening $a$ with color $0$ and type $0$ at $x$, the case $C(e'') = c$, 
    the case $C(e'') \neq c$ and an arrow of length $r=4$. The second row is the analog for color $1$. }
    \label{fig:arrows}
\end{figure}

\noindent
Let $A$ be an arrow of length $r \geq 1$ at an opening with center $x$. 
The reversal of $A$ is the configuration obtained by applying a rotation of $\pi$ to $A$. 
An example of arrow reversal is given in Figure \ref{fig:arrow_reversal}.

\begin{figure}[H]
    \centering
    \includegraphics[scale=0.8]{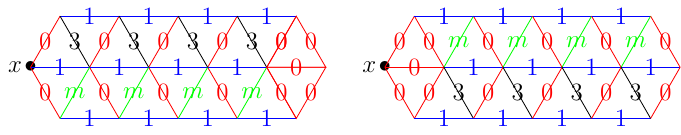}
    \caption{Reversal of an arrow of length $4$ at $x$.}
    \label{fig:arrow_reversal}
\end{figure}

\section{The case $G(C, 2) = 0$}
\label{sec:g_zero_case}

In this section, we prove \eqref{eq:hc_count_general} for color maps $C$ such that $G(C, 2) = 0$. 
We first reduce the color map $C$ to $C'$ so that all the $0$ colored edges on $\partial_0 C' $ are consecutive 
starting from the bottom left coner of $T_n$. 
This is done in Section \ref{subsec:reduction_of_partial_0}. In Section \ref{subsec:case_p_1}, we give an explicit counting of 
$m(C')$ and in Section \ref{subsec:proof_g_zero} we show \eqref{eq:hc_count_general} when $G(C, 2) = 0$ using the two previous sections.

\subsection{Reduction of color maps}
\label{subsec:reduction_of_partial_0}

\begin{definition}[Lozenge and trapeze regions]
    Let $x = (x_0, x_1, x_2) \in T_n$ and let $r, s \geq 0$ be such that $(r, s) \neq (0,0)$. Define the lozenge region $L[r, s, x] \subset E_n$ as 
    \begin{equation}
    \label{eq:lozenge_region}
    L[r, s, x] = E (\{ x + u + v \xi, (u,v) \in  \{ 0, \dots, r \} \times \{ 0, \dots, s \}  \} )
    \end{equation}
    where for a subset $V \subset T_n$, $E(V) \subset E_n$ is the subset of edges having both endpoints in $V$. 
    Moreover, the trapeze region $T[r, s, x] \subset E_n$ is defined for $s \geq r$ as
    \begin{equation}
    \label{eq:trapeze_region}
    T[r, s, x] = E (\{ x + u + v \xi, (u,v) \in  \{ 0, \dots, r \} \times \{ 0, \dots, s \}: v + u \leq s  \} ).
    \end{equation}
\end{definition}

\noindent For an illustration of lozenge and trapeze regions, see Figure \ref{fig:bottom}.

\begin{lemma}[Filling a lozenge region]
\label{lem:lozenge_filling}
    Let $L[r, s, x]$ be a region as in \eqref{eq:lozenge_region}. 
    Suppose that its boundary edges $\{ (x+u, x+u-1), 1 \leq u \leq r \}$ and $\{ (x +v \xi, x+ (v+1) \xi), 0 \leq v \leq s-1 \}$ 
    are colored $1$ and $0$ respectively. Then, every edge in $L[r, s, x]$ of type $1$ has color $3$, which determines the color of 
    all edges in $L[r, s, x]$ uniquely.
\end{lemma}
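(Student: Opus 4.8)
The plan is to induct on $s$, peeling off the bottom row of unit rhombi. First I would fix conventions inside the parallelogram $\{x+u+v\xi:0\le u\le r,\,0\le v\le s\}$: its $rs$ edges of type $1$ are exactly the diagonals of the $rs$ unit rhombi, the rhombus indexed by $(u,v)$ being split by the type-$1$ edge from $x+(u+1)+v\xi$ to $x+u+(v+1)\xi$ into a direct face $\{x+u+v\xi,\ x+(u+1)+v\xi,\ x+u+(v+1)\xi\}$ and a reversed face $\{x+u+(v+1)\xi,\ x+(u+1)+(v+1)\xi,\ x+(u+1)+v\xi\}$. The type-$0$ edges are the horizontal ones and the type-$2$ edges are those in direction $\xi$, so the hypothesis says precisely that the bottom horizontal edges are colored $1$ and the left type-$2$ edges are colored $0$; in particular $L[r,s,x]$ contains no type-$1$ edge when $r=0$ or $s=0$, which handles the base cases.

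The core is a one-rhombus step. I would check, using the clockwise face rule, that for a direct face $\{y,\,y+1,\,y+\xi\}$ whose type-$0$ edge $(y+1,y)$ is colored $1$ and whose type-$2$ edge $(y,y+\xi)$ is colored $0$, the clockwise color sequence is $(0,?,1)$ with $?$ the color of the type-$1$ edge, and the only cyclic rotation of an admissible triple of this shape is $(0,3,1)$ — in particular $?=3$, and it is exactly the orientation that rules out the $m$-lozenge pattern. Then, looking at the reversed face $\{y+\xi,\,y+1+\xi,\,y+1\}$ of the same rhombus, whose type-$1$ edge is now known to have color $3$, the clockwise sequence is $(?,?,3)$ and the only admissible rotation (necessarily a rotation of $(1,0,3)$) is $(1,0,3)$; this forces the type-$0$ edge $(y+1+\xi,y+\xi)$ to color $1$ and the type-$2$ edge $(y+1,y+1+\xi)$ to color $0$.

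Next I would run this step across the bottom row by a secondary induction on $u$ from $0$ to $r-1$: at column $u$, the type-$2$ edge $(x+u,x+u+\xi)$ is colored $0$ (by hypothesis if $u=0$, by the previous step otherwise) and the horizontal edge $(x+u+1,x+u)$ is colored $1$ by hypothesis, so the one-rhombus step applies and yields both that the type-$1$ diagonal of rhombus $u$ is $3$ and that the next-row horizontal edge $(x+u+1+\xi,x+u+\xi)$ is colored $1$. After all $r$ columns are processed, every type-$1$ edge of the bottom row is $3$, and $L[r,s-1,x+\xi]$ has its bottom horizontal edges colored $1$ (exactly the edges just produced) and its left type-$2$ edges colored $0$ (the hypothesis edges with $v\ge 1$). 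The induction hypothesis on $s$ then applies to $L[r,s-1,x+\xi]$, giving that all its type-$1$ edges are $3$; together with the bottom row this shows every type-$1$ edge of $L[r,s,x]$ is colored $3$.

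For the uniqueness clause: once every type-$1$ edge is $3$, each unit rhombus is a $3$-lozenge, and since all unit rhombi in $L[r,s,x]$ are identically oriented, the clockwise face rule forces every type-$0$ edge to color $1$ and every type-$2$ edge to color $0$; conversely this assignment satisfies the rule on every direct and reversed face, so it is the unique coloring of $L[r,s,x]$ extending the given boundary data. The only delicate point in all of this is the single orientation computation in the second paragraph — correctly reading off the clockwise order of the edges of the direct and reversed faces so as to see that $(0,3,1)$, and not the $m$-lozenge rotation, is the forced triple; once that is settled the argument is a routine double induction.
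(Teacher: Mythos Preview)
Your argument is correct and follows essentially the same route as the paper's proof: both establish the single-rhombus forcing (bottom edge $1$ and left edge $0$ force the diagonal to $3$, then the reversed face forces the top edge to $1$ and the right edge to $0$), sweep this across the bottom row $u=0,\dots,r-1$, and then invoke the induction hypothesis on $L[r,s-1,x+\xi]$. Your write-up is more explicit about the clockwise orientation check and the uniqueness clause, but the strategy is identical.
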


\begin{proof}
    For any $v \in T_n$ such that $C((v+1, v)) = 1$ and $C((v, v+\xi)) = 0$, there is only one possible set of values for a color map 
    $C$ on edges $ (v+\xi, v+1), (v+1+\xi, v+\xi), (v+1, v+1+\xi) $ which is given by $(3,1,0)$. 
    Applying this constraint to $v = x, x+1, \dots, x+r-1$ in this order and using induction to fill the remaining region 
    $L[r, s-1, x + \xi]$ shows the result.
\end{proof}

\begin{lemma}[Filling a trapeze region]
\label{lem:trapeze_filling}
    Let $T[r, s, x]$ be a region as in \eqref{eq:trapeze_region}. Suppose that its edges $\{ (x+u, x+u-1), 1 \leq u \leq r \}$ 
    and $\{ (x +v \xi, x+ (v+1) \xi), 0 \leq v \leq s-1 \}$ are colored $0$. 
    Then, up to some arrow reversals, every edge in $T[r, s, x]$ has color $0$.
\end{lemma}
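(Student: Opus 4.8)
The plan is to mimic the proof of Lemma \ref{lem:lozenge_filling}, but since the two bounding strips are now both colored $0$ rather than $0$ and $1$, one cannot expect the filling to be forced edge by edge; instead the local choices along the hypotenuse create arrows which may have to be reversed. First I would set up the induction on $s$. For the base case $s=r$ (in particular $s=1,r=1$ is the smallest nontrivial trapeze) one checks directly that a single reversed-triangle face with all edges $0$, possibly after reversing a short arrow, fills the region. For the inductive step, the idea is to peel off the bottom row of faces of $T[r,s,x]$ — the faces incident to the strip $\{(x+u,x+u-1),1\le u\le r\}$ — leaving a smaller trapeze $T[r-1,s-1,x+\xi]$ (or $T[r,s-1,x+\xi]$ depending on how the indices line up in \eqref{eq:trapeze_region}) whose two bounding strips are again colored $0$, so the inductive hypothesis applies.

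The key structural observation to exploit is the one recorded just before the definition of arrows: given an opening of color $c$, following the middle edges $e''$ one either immediately closes up into a pair of monochromatic $c$-faces or one passes through a chain of $3$- and $m$-lozenges, i.e. an arrow. Starting from the bottom-left corner of the trapeze, the two $0$-colored boundary edges meeting there form an opening of color $0$; by the paragraph preceding Definition \ref{def:arrow} this opening spawns an arrow of some length $r_0\ge 0$. If $r_0=0$ the corner is filled by two $0$-faces; if $r_0\ge 1$, I would reverse that arrow (this is exactly the operation \emph{arrow reversal} introduced in Figure \ref{fig:arrow_reversal}) so that, \emph{after reversal}, the local configuration at the corner is again two $0$-faces and the $0$-coloring propagates one step further along the boundary. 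Iterating this along the bottom strip — at each vertex $x+u$ the incoming edges are $0$, creating an opening that is either trivial or an arrow to be reversed — fills the entire bottom row of $T[r,s,x]$ with $0$-faces up to arrow reversals, and crucially leaves the next strip $\{(x+\xi+u,x+\xi+u-1)\}$ colored $0$ as well. Then the slanted strip $\{(x+v\xi,x+(v+1)\xi)\}$ is already $0$ by hypothesis, so the residual region $T[r-1,s-1,x+\xi]$ satisfies the hypotheses of the lemma and we conclude by induction that it too is all $0$ up to arrow reversals.

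I expect the main obstacle to be bookkeeping rather than conceptual: one has to verify that reversing an arrow based at a corner does not disturb edges that were already fixed to $0$ in earlier steps of the induction, i.e. that the arrows peeled off in successive steps are disjoint or at least that reversals commute appropriately, so that the phrase ``up to some arrow reversals'' is well-defined and the final configuration is genuinely all-$0$. This amounts to checking that an arrow emanating from the opening at $x+u$ stays within the not-yet-filled part of the trapeze, which follows because the arrow travels in the direction transverse to the bottom strip (its middle edges are of the type complementary to the two boundary types at the corner) and the trapeze constraint $v+u\le s$ guarantees it cannot exit through the hypotenuse before closing. A secondary point to be careful about is the precise indexing in \eqref{eq:trapeze_region}: one must confirm that removing the bottom face-row of $T[r,s,x]$ yields a region of the same trapeze shape (with $s$ decreased by one and the apex preserved), which is immediate from the inequality $v+u\le s$ but should be stated explicitly so the induction is clean.
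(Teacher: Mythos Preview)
Your overall strategy—use openings, reverse the arrows they spawn, and induct by peeling off a boundary strip—is exactly the paper's strategy. But the specific orientation you chose (peel off the \emph{bottom} row and induct on $s$) does not close, and the description of the first opening is off.

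First, the two boundary edges meeting at the corner $x$, namely $(x+1,x)$ of type $0$ and $(x,x+\xi)$ of type $2$, do \emph{not} form an opening in the sense of Section~\ref{sec:arrows}: they are two sides of the same triangular face, not two outer edges of a lozenge lying in different faces. Consequently the paragraph before the definition of an arrow does not apply; the only thing forced at the corner is that the single direct face $x,x+1,x+\xi$ is all $0$. (In particular your ``$r_0\ge 1$'' case never occurs at $x$, and the ``two $0$-faces'' conclusion for $r_0=0$ is unjustified there.) The genuine openings along the bottom are at $x+1,\dots,x+r-1$, of type $2$ (edges of types $0$ and $1$), with middle edges of type $2$ and arrows running in the $\xi$ direction—not type $1$ ``transverse'' as you wrote.

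Second, and more seriously, even with the correct openings the bottom-row peel fails when $s>r$. Reversing the arrows at $x+1,\dots,x+r-1$ clears the direct faces $0,\dots,r-1$ and the reversed faces $0,\dots,r-2$ of the bottom row, but the rightmost reversed face (on vertices $x+r,\,x+r-1+\xi,\,x+r+\xi$, which lies in $T[r,s,x]$ precisely when $s>r$) is \emph{not} cleared: there is no opening at $x+r$ because the edge $(x+r+1,x+r)$ is outside the hypothesised $0$-strip. Hence neither of your candidate residuals works—$T[r,s-1,x+\xi]$ is missing one required $0$-edge on its bottom, and $T[r-1,s-1,x+\xi]$ together with the bottom row does not cover the column $u=r$.

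The paper avoids this by inducting on $r$ and peeling off the \emph{left} strip $T[1,s,x]$ instead. That strip is itself a trapeze (it narrows at the top, so there is no leftover face), and the openings at $x+v\xi$ for $v=1,\dots,s-1$ are of type $0$ with arrows running horizontally to the right; after reversing them $T[1,s,x]$ is entirely $0$, and $T[r,s,x]=T[1,s,x]\cup T[r-1,s-1,x+1]$ gives the induction. The non-interference you worried about is handled by the one-line observation that a horizontal arrow reversal only touches edges with origin $y$ satisfying $y_0<x_0$.
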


\begin{proof}
    We will prove the result by induction over $r$. Assume that $r=1$. The triangular face having edges $(x+1, x)$ and $(x, x+ \xi)$ 
    colored $0$ has its third edge $(x+ \xi, x+1)$ of type $1$ also colored $0$. Then, the edges $(x+ \xi, x+1)$ and $(x+ \xi, x+2\xi)$ 
    form a $0$ opening that we call $o_1$. Consider the arrow $A_1$ at $o_1$ of length $\ell \geq 0$ having its other endpoint at $x + \ell + 1$. 
    Apply the arrow reversal as in Figure \ref{fig:arrow_reversal} which only changes the colors of edges inside $A_1$.
    Then, the edges $(x+1, x+1+\xi), (x+1+\xi, x+\xi)$ and $(x+2\xi, x+1+\xi)$ have color $0$. 
    Notice that in the resulting configuration, the edges $(x+ 2\xi, x+1+\xi)$ and $(x+ 2\xi, x+3\xi)$ form a $0$ opening.
    Moreover, reversing an arrow between endpoints $x$ and $x + \ell + 1$ does not modify the colors of the edges $e$ 
    having origin $y$ such that $y_0 \geq x_0$. 
    By successively considering the $0$ openings formed by edges $(x+ v \xi, x+1+ (v-1) \xi)$ and $(x+ v\xi, x+(v+1)\xi)$ 
    for $1 \leq v \leq s-1$, we get that $T[1, s, x]$ has every edge colored $0$. \\
    For $r \geq 2$, using the same argument as above shows that all edges in $T[1, s, x]$ are colored $0$. 
    Since  $T[r, s, x] = T[1, s, x] \cup T[r-1, s-1, x+1]$, one gets the result by induction. 
\end{proof}

\begin{remark}[Filling lozenge and trapeze regions]
    Note that the results of Lemmas \ref{lem:lozenge_filling} and \ref{lem:trapeze_filling} remain valid if one swaps labels $0$ and $1$, 
    replacing $3$ lozenges in a lozenge region by $m$ lozenges and $0$ colored edges in the trapeze region by $1$ colored edges.
\end{remark}

\noindent
The next Lemma shows that the bottom region adjacent to $\partial_0^{(n)} $  of a color map 
has an explicit description in terms of lozenge and trapeze regions. 
An illustration of that region is given in Figure \ref{fig:bottom} where the lozenge regions are filled with lozenges having 
middle edge of type $1$ colored $3$ and trapeze regions have all of their edges colored $0$.

\begin{lemma}[Structure above $\partial_0^{(n)}$]
\label{lem:bottom_structure}
    Let $C$ be a color map such that $G(C, 2) = 0$. 
    Then, there exists $p = p(C) \geq 1$ and
     $0 = y_0 \leq x_1 < y_1 < x_2 < \dots < y_{p-1} < x_p < y_p \leq n$
    such that up to some arrow reversals, denoting $r_i = y_i -x_i$ and $b_i = x_i - y_{i-1}$ for $1 \leq i \leq p$, the regions 
     \begin{equation*}
        L[b_1, n_0, (y_0, 0)], L[b_2, n_0-r_1, (y_1, 0)], \dots, L[b_p,  r_p , (y_p, 0)]
    \end{equation*}
    are filled by lozenges having middle edge of type $1$ colored $3$ and such that regions
    \begin{equation*}
        T[r_1, n_0, (x_1, 0)], T[r_2, n_0-r_1, (x_2, 0)], \dots, T[r_p, r_p, (x_p, 0)]
    \end{equation*}
    have their edges colored $0$.
\end{lemma}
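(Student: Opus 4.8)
The plan is to use the hypothesis $G(C,2)=0$ only to pin down the boundary word $\partial_2 C$, and then to propagate colors inward from the corner $O$ of $T_n$ at which $\partial_0^{(n)}$ and $\partial_2^{(n)}$ meet, filling the claimed regions one after another by repeated application of Lemmas~\ref{lem:lozenge_filling} and~\ref{lem:trapeze_filling}. First I would note that $G(C,2)$ is a sum of nonnegative integers $n(C,e)$ over the $0$-colored edges $e$ of $\partial_2^{(n)}$, so $G(C,2)=0$ forces $n(C,e)=0$ for each such $e$; that is, no $1$-colored edge of $\partial_2^{(n)}$ has strictly smaller height than a $0$-colored one, hence, reading $\partial_2^{(n)}$ from $O$ in order of increasing height, one gets $\partial_2 C = 0^{n_0}1^{n_1}$. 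If $n_0=0$ there is nothing to prove, so I assume $n_0\ge 1$. Then I would read $\partial_0 C$ starting from the edge at $O$ and decompose it into its maximal monochromatic runs $1^{b_1}0^{r_1}1^{b_2}0^{r_2}\cdots 1^{b_p}0^{r_p}1^{b_{p+1}}$, with $b_1,b_{p+1}\ge 0$, the remaining exponents $\ge 1$ and $p\ge 1$; setting $y_0=0$, $x_i=y_{i-1}+b_i$, $y_i=x_i+r_i$ gives the required chain $0=y_0\le x_1<y_1<\cdots<x_p<y_p\le n$, the last bound because $\sum_i r_i=n_0$ forces $y_p=n_0+\sum_i b_i\le n_0+n_1=n$.

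The core is then an induction on $i$ carrying the invariant that, after filling the regions $L[b_1,n_0,(y_0,0)],T[r_1,n_0,(x_1,0)],\dots,L[b_i,h_i,(y_{i-1},0)],T[r_i,h_i,(x_i,0)]$ with $h_i=n_0-(r_1+\cdots+r_{i-1})$, the right side of $T[r_i,h_i,(x_i,0)]$ --- a column of $h_i-r_i=h_{i+1}$ edges of type $2$ --- is entirely $0$-colored, up to arrow reversals. In the base case the left side of $L[b_1,n_0,(y_0,0)]$ is the initial segment of $\partial_2^{(n)}$ of length $n_0$, hence $0$-colored, and its base edges form the run $1^{b_1}$, so Lemma~\ref{lem:lozenge_filling} determines every edge of this region (all its lozenges being $3$ lozenges with middle edge of type $1$) and its right side is a $0$-colored type-$2$ column of height $n_0$; if $b_1=0$ the region is empty and its right side equals its left side. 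This column is the left side of $T[r_1,n_0,(x_1,0)]$, whose base is $0^{r_1}$ and which satisfies $n_0\ge r_1$, so Lemma~\ref{lem:trapeze_filling} fills it with $0$-colored edges (up to arrow reversals), leaving its right side $0$-colored of height $n_0-r_1=h_2$. The inductive step is a verbatim repetition: the left side of $L[b_{i+1},h_{i+1},(y_i,0)]$ is the $0$-colored right column obtained at step $i$ and its base is $1^{b_{i+1}}$, so Lemma~\ref{lem:lozenge_filling} applies; then $T[r_{i+1},h_{i+1},(x_{i+1},0)]$ has $0$-colored left side, base $0^{r_{i+1}}$ and $h_{i+1}\ge r_{i+1}$, so Lemma~\ref{lem:trapeze_filling} applies. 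The process terminates at $i=p$, where $h_p=n_0-(r_1+\cdots+r_{p-1})=r_p$, so the last region is $T[r_p,r_p,(x_p,0)]$ and its right side is empty.

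What is left is bookkeeping that I would collect at the end: along $\partial_0^{(n)}$ the intervals $[y_{i-1},x_i]$ and $[x_i,y_i]$ tile $[0,y_p]$, consecutive regions meet only along one common type-$2$ column, and all regions fit inside $T_n$ (since $\sum_{j\le i}b_j+n_0\le n$), so the fillings are mutually consistent; and the arrow reversals invoked for the trapeze regions stay local --- by the statement in the proof of Lemma~\ref{lem:trapeze_filling} that such a reversal does not alter edges with origin $y$ satisfying $y_0\ge x_0$ --- so they do not disturb the regions filled earlier, and in any case ``up to some arrow reversals'' is exactly the conclusion claimed. The one delicate point, and the step I expect to require the most care, is the propagation of the invariant itself: one must use the precise output of Lemmas~\ref{lem:lozenge_filling} and~\ref{lem:trapeze_filling} --- namely that the right column of a filled lozenge region (resp.\ trapeze region) is $0$-colored of height $h_i$ (resp.\ $h_i-r_i$) --- rather than merely the fact that the regions can be filled, in order to keep the heights $n_0,\,n_0-r_1,\,n_0-r_1-r_2,\dots$ correct all the way down to $T[r_p,r_p,(x_p,0)]$.
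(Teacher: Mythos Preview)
Your proposal is correct and follows essentially the same route as the paper's own proof: use $G(C,2)=0$ to force $\partial_2 C=0^{n_0}1^{n_1}$, read off the run decomposition of $\partial_0 C$ to define the $x_i,y_i$, and then fill the alternating lozenge and trapeze regions left to right by successive applications of Lemmas~\ref{lem:lozenge_filling} and~\ref{lem:trapeze_filling}, checking that the arrow reversals from the trapeze lemma only touch edges to the right and hence do not disturb what has already been filled. Your write-up is in fact slightly more explicit than the paper's about the inductive invariant (the $0$-colored right column of height $h_{i+1}$) and about the bookkeeping, but the argument is the same.
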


\begin{proof}
    Set $y_0 = 0$ and by convention $C((0, -1)) = C((n+1, n)) = 1$. Define
    \begin{equation}
        p = \left| \{ 1 \leq x \leq n-1:  C((x, x-1)) = 0 \text{ and } C((x+1, x)) = 1 \} \right| \geq 1
    \end{equation}
    and for $1 \leq i \leq p$,
    \begin{align}
        x_i &= \inf \{ u \geq y_{i-1} : C( (u, u-1)) = 1 \text{ and } C( (u+1, u)) = 0  \} \\
        y_i &= \inf \{ u \geq x_i : C( (u, u-1)) = 0 \text{ and } C( (u+1, u)) = 1  \}. 
    \end{align}
    We have that $y_0 \leq x_1 < y_1 < x_2 < \dots < y_{p-1} < x_p < y_p$. Recall that $r_i = y_i -x_i$ and $b_i = x_i - y_{i-1}$ for $1 \leq i \leq p$. \\
    \\
    Since $G(C, 2) = 0$, the region $L[b_1, n_0, (0,0)]$ has its edges $\{ (x, x-1), 1 \leq x \leq b_1 \}$ and 
    $\{ (x, x+\xi), 0 \leq x \leq n_0-1 \}$ colored $1$ and $0$ respectively which implies by Lemma \ref{lem:lozenge_filling} 
    that it is filled by lozenges having middle edge of type $1$ colored $3$ except in the case where $b_1 = 0$ for which 
    $L(b_1, n_0, (0,0)) = \{ (x, x+\xi), 0 \leq x \leq n_0-1 \}$ has all of its edges on $\partial_2^{(n)}$ colored $0$.
    Remark that edges in $L(b_1, n_0, (0,0))$ with coordinate $e_1$ equal to $x_1$ are colored $0$. Therefore, the trapeze region 
    $T[r_1, n_0, (x_1, 0)]$ has its boundary edges colored $0$ as in Lemma \ref{lem:trapeze_filling} which shows that up to arrow reversals, 
    it has all of its edges colored $0$. Using Lemmas \ref{lem:trapeze_filling} and \ref{lem:lozenge_filling} successively on the regions
     \begin{equation*}
        L[b_1, n_0, (y_0, 0)], L[b_2, n_0-r_1, (y_1, 0)] \dots, L[b_p,  r_p , (y_{p-1}, 0)]
    \end{equation*}
    and 
    \begin{equation*}
        T[r_1, n_0, (x_1, 0)], T[r_2, n_0-r_1, (x_2, 0)] \dots, T[r_p, r_p, (x_p, 0)]
    \end{equation*}
    gives the result. Notice that the order of the applications of Lemma \ref{lem:trapeze_filling} is compatible with the 
    arrow reversals involved for the trapeze regions in the sense that arrow reversals in $T[a, b, x]$ only affect edges $e \in E_n$ 
    such that $e_1 \geq x_1$.  
    \begin{figure}[H]
        \centering
        \includegraphics[scale=0.75]{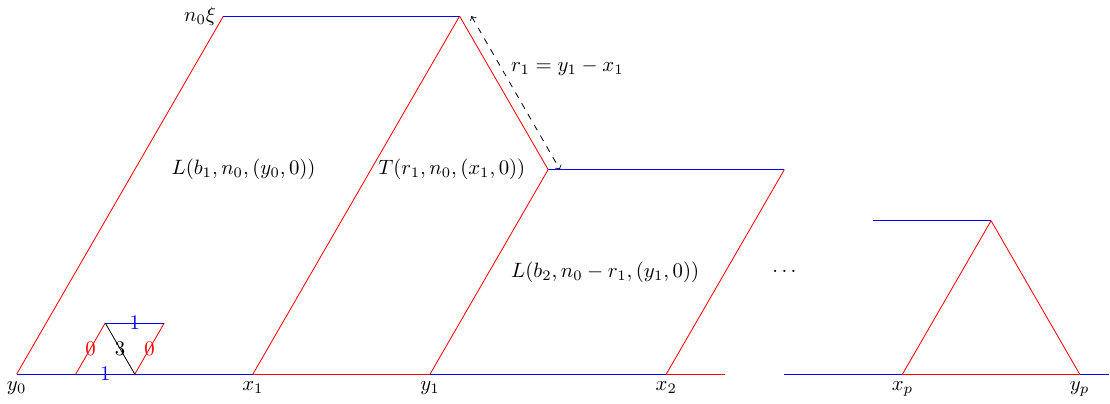}
        \caption{Region at the bottom of $C$. Lozenge regions are filled with $3$ lozenges and trapeze regions are filled with $0$ colored edges.}
        \label{fig:bottom}
    \end{figure}
\end{proof}

\noindent
Recall that for $1 \leq i \leq p, r_i(C) = y_i - x_i$ denotes the number of edges of $\partial_0 C$ which are in the $i$-th 
trapeze region of Figure \ref{fig:bottom}.

\begin{lemma}[Grouping columns]
\label{lem:grouping_col}
    Let $C$ be a color map and let $p = p(C)$ and $0 = y_0 \leq x_1 < y_1 < x_2 < \dots < y_{p-1} < x_p < y_p \leq n$ 
    be defined as in Lemma \ref{lem:bottom_structure}. Assume that $p(C) \geq 2$. Using arrow reversal, 
    adding $r_p \times b_p$ $m$-colored edges and removing $r_p \times b_p$ $3$-colored edges, 
    one can map $C$ to $C'$ such that $p' = p(C') = p(C) - 1$ and $r_{p'}(C') = r_p(C) + r_{p-1}(C)$.
\end{lemma}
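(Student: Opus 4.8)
The plan is to work at the bottom of the color map, where Lemma \ref{lem:bottom_structure} gives a concrete picture: the last lozenge region $L[b_p, r_p, (y_{p-1},0)]$ is filled by $3$ lozenges with type-$1$ middle edge, and the last trapeze region $T[r_p, r_p, (x_p,0)]$ has all edges colored $0$. The defining feature of $p(C)$ is that moving along $\partial_0^{(n)}$ from right to left one sees a maximal run of $0$'s starting at position $x_p$ (width $r_p$) immediately to the right of a block of $1$'s of width $b_p$ (this is the region $L[b_p,r_p,(y_{p-1},0)]$), and this $1$-block is itself preceded by a $0$-run corresponding to the $(p-1)$-st trapeze. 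The idea is to \emph{slide} the last $0$-run leftward past the adjacent $1$-block so that the two trapeze regions $T[r_{p-1},\dots,(x_{p-1},0)]$ and $T[r_p,r_p,(x_p,0)]$ merge into a single trapeze of width $r_{p-1}+r_p$, which reduces $p$ by one. The local move that accomplishes this is exactly the arrow reversal of Section \ref{sec:arrows}.

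First I would describe the move explicitly. Inside $L[b_p, r_p, (y_{p-1},0)]$ every type-$1$ edge is a $3$ lozenge; the bottom boundary row consists of $b_p$ edges colored $1$ and the left slanted boundary of $r_p$ edges colored $0$ (these are the $0$'s of the $p$-th trapeze seen from the side). Working column by column from the left boundary of this lozenge region, each $0$ opening at the interface generates an arrow of some length $\ell\ge 0$ whose far endpoint lies on the bottom side $\partial_0^{(n)}$; reversing it (Figure \ref{fig:arrow_reversal}) pushes the $0$ color to the right along the bottom and recolors the interior type-$1$ edges. Doing this for all $r_p$ slanted edges converts the $3$ lozenges of $L[b_p,r_p,(y_{p-1},0)]$ into $m$ lozenges and swaps the positions of the $0$-run and the $1$-block on $\partial_0^{(n)}$. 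Since $L[b_p,r_p,\cdot]$ contains exactly $r_p\times b_p$ type-$1$ edges, each previously colored $3$ and now colored $m$, this is precisely an operation adding $r_p\times b_p$ $m$-colored edges and removing $r_p\times b_p$ $3$-colored edges, as claimed. Crucially, each such arrow reversal — as already noted in the proofs of Lemma \ref{lem:trapeze_filling} and Lemma \ref{lem:bottom_structure} — only affects edges $e$ with $e_1\geq x_p$, so it does not disturb the regions further to the left; hence the structure of $C$ above $\partial_0^{(n)}$ to the left of column $x_p$ is untouched.

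Next I would verify that the resulting map $C'$ is still a valid color map (local triangle conditions are preserved because arrow reversal is a rotation of a configuration which already satisfied them, as established in Section \ref{sec:arrows}) and then compute $p(C')$ and $r_{p'}(C')$. After the move, reading $\partial_0 C'$ from the right: positions $x_p,\dots,y_p-1$ carry the $1$-block (width $b_p$) and positions $x_{p-1},\dots$ now carry a single uninterrupted $0$-run of width $r_{p-1}+r_p$, because the former $1$-block that separated the $(p-1)$-st and $p$-th $0$-runs has been pushed to the right of both. Applying the definitions of $x_i,y_i$ from Lemma \ref{lem:bottom_structure} to $\partial_0 C'$, the two boundary transitions "$1$ then $0$" at former positions $x_{p-1}$ and $x_p$ have been replaced by a single one, so the count $p(C')$ of such transitions drops by exactly one, and the width of the last trapeze region of $C'$ is $r_{p-1}(C)+r_p(C)$. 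This gives $p'=p-1$ and $r_{p'}(C')=r_p(C)+r_{p-1}(C)$.

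The main obstacle I anticipate is bookkeeping the arrow reversals carefully enough to be sure that (i) they can all be performed in a consistent order without one reversal invalidating the opening used by the next — this should follow from the monotonicity observation that a reversal in the relevant region only touches edges with $e_1\ge x_p$, combined with processing the slanted edges in increasing order of their second coordinate, exactly as in the proof of Lemma \ref{lem:trapeze_filling} — and (ii) the exact count of recolored type-$1$ edges is $r_p b_p$ with no spillover, which requires knowing that outside $L[b_p,r_p,(y_{p-1},0)]$ no $3\leftrightarrow m$ change occurs; this again is guaranteed because the arrows' far endpoints terminate on $\partial_0^{(n)}$ within the strip $x_p\le e_1< y_p$ and the new $0$ openings created at each step stay inside that strip until the row is exhausted. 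Once these two points are nailed down, matching the definition of $p$ and $r_{p'}$ for $C'$ is routine.
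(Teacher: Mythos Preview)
There is a genuine gap. Your entire mechanism rests on the claim that reversing arrows ``converts the $3$ lozenges of $L[b_p,r_p,(y_{p-1},0)]$ into $m$ lozenges and swaps the positions of the $0$-run and the $1$-block on $\partial_0^{(n)}$.'' But arrow reversal, as defined in Section~\ref{sec:arrows}, is literally a rotation of a local configuration by $\pi$: it permutes the colors among the edges of the arrow and therefore preserves the multiset of colors. In particular it cannot turn a $3$-edge into an $m$-edge, and it cannot change any color on $\partial_0^{(n)}$ (the arrow's terminal pair of monochromatic triangles lies in the interior, not on the boundary). So neither the advertised count of $r_p b_p$ color changes nor the swap of boundary runs can come from arrow reversals alone.

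What the paper actually does is introduce a separate local move, the \emph{replacement} at a vertex $v$ where $C((v+\xi-1,v))=3$ and $C((v+1,v))=0$: one sets $C_v((v,v-1))=0$ and $C_v((v,v+\xi))=m$. This single move removes one $3$-edge, adds one $m$-edge, and shifts one $0$ on $\partial_0^{(n)}$ by one position to the left. Applying replacements at $x_p,x_p-1,\dots,y_{p-1}+1$ slides the rightmost $0$ across the whole $1$-block (cost: $b_p$ replacements), and then $r_p$ arrow reversals (which are cost-free for the $3/m$ count) push the resulting interface up through the lozenge region so that the next $0$ on $\partial_0$ is again adjacent to the $1$-block. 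Iterating this $r_p$ times gives exactly $r_p b_p$ replacements and the desired merge. Your outline is missing this replacement step entirely; once you add it, the order-of-operations and no-spillover concerns you raise become the right things to check, but the bookkeeping is for the replacements, not for the reversals.
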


\begin{proof}

Let us first define a local transformation. Consider any vertex $v \in T_n$ such that $C((v + \xi -1, v)) = 3 $ and $ C((v+1, v)) = 0$,
 so that $C((v+\xi, v+1)) = 0$. Consider the color map $C_v$ where $C_v((v, v-1)) = 0$ and $C_v((v, v + \xi)) = m$. 
 We call $C \mapsto C_v$ the replacement at $v$, see Figure \ref{fig:replacement}. 
 The color map $C_v$ has one less $3$ colored edge and one more $m$ colored edge than $C$. 

\begin{figure}[H]
    \centering
    \includegraphics[scale=0.9]{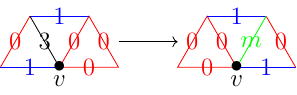}
    \caption{Replacement at vertex $v$.}
    \label{fig:replacement}
\end{figure}

\noindent
Consider the lozenge region $L[b_p+2, r_p, (y_{p-1}-1, 0)]$
where $b_p = x_p - y_{p-1}$ and $r_p = y_p - x_p \geq 1$, see Figure \ref{fig:group_col_1}. 
Apply replacements successively at $x_p, \dots, y_{p-1}+1$ and call $C_1$ the resulting color map, 
see Figure \ref{fig:group_col_2} and Figure \ref{fig:group_col_3} for an illustration of this step. 
Notice that $C_1$ has an arrow of length $b_p$ at the $0$ opening at $v_1 = (y_{p-1}, 0) + \xi$. 
Reverting this arrow creates an arrow at $v_2 = v_1 + \xi$ of length $b_p$, see Figure \ref{fig:group_col_4}. 
By reverting arrows with openings at $(y_{p-1}, 0) + q \xi, 1 \leq q \leq r_p $ each with length $b_p$,
 the resulting color map $\varphi(C)$ satisfies
\begin{align}
    p(\varphi(C)) &= p \text{ and } r_p(\varphi(C)) = r_p - 1 \text{ if } r_p \geq 2, \\
    p(\varphi(C)) &= p-1 \text{ and } r_{p(\varphi(C))}(\varphi(C)) = r_{p-1} + 1 \text{ if } r_p = 1,
\end{align}
see Figure \ref{fig:group_col_5}.
By applying the previous transformation $C \mapsto \varphi(C)$ a number of times equal to $r_p$, 
one gets a color map $C'$ such that $p(C') = p-1 $ and $r_{p(C')}(C') = r_{p-1} + r_p$. 


\begin{figure} [H]
    \begin{minipage}[c]{0.33\linewidth}
    \includegraphics[scale=0.6]{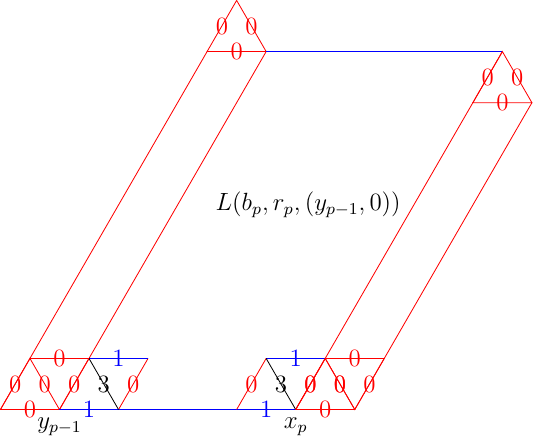}
    \caption{Initial color map $C$. \\}
    \label{fig:group_col_1}
    \end{minipage}
    \begin{minipage}[c]{0.33\linewidth}
    \includegraphics[scale=0.6]{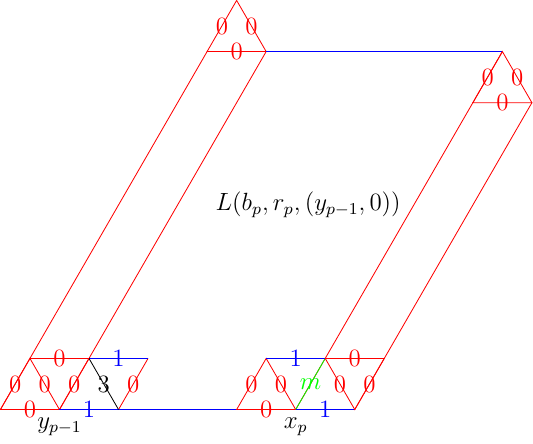}
    \caption{Replacement at $x_p$.}
    \label{fig:group_col_2}
    \end{minipage}%
    \begin{minipage}[c]{0.33\linewidth}
        \includegraphics[scale=0.6]{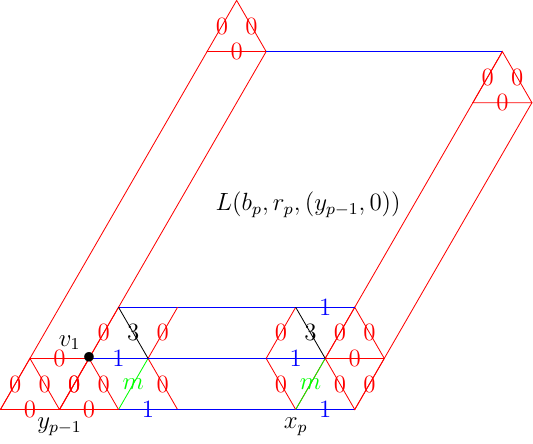}
        \caption{Replacements at $x_p, \dots, y_{p-1}+1$.}
        \label{fig:group_col_3}
        \end{minipage}%
    \end{figure}

\begin{figure} [H]
\begin{minipage}[c]{0.4\linewidth}
\includegraphics[scale=0.6]{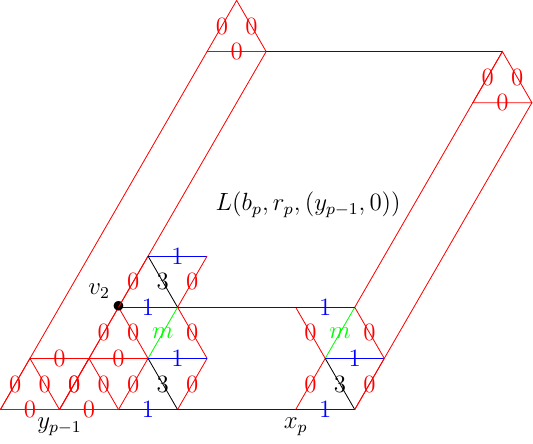}
\caption{First arrow reversal creating an arrow at $v_2$.}
\label{fig:group_col_4}
\end{minipage}%
\begin{minipage}[c]{0.4\linewidth}
    \includegraphics[scale=0.6]{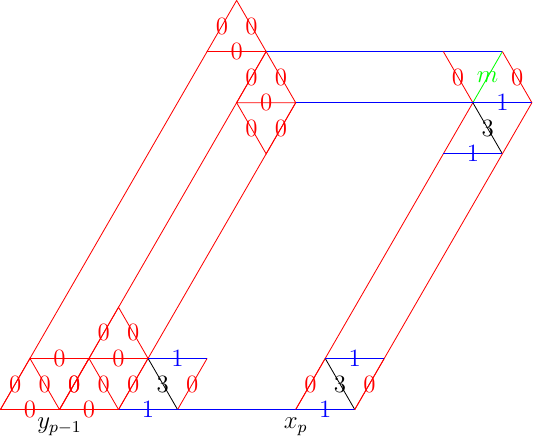}
    \caption{Final configuration $\varphi(C)$.}
    \label{fig:group_col_5}
    \end{minipage}%
\end{figure}


\end{proof}

\noindent
From the previous Lemmas, one derives the following result. 

\begin{proposition}[Edge count during reduction]
\label{prop:reduction_partial_0}
    Let $C$ be a color map such that $G(C, 2) = 0$ and let $b_1$ be defined as in Lemma \ref{lem:bottom_structure}.
    One can reduce $C$ to a color map $C'$ such that $p(C') = 1$ and $b_1(C') = 0$ 
    by removing (respectively adding) $M = n_0n_1 - G(C, 0)$ 
    edges of color $3$ (respectively of color $m$) so that $m(C') = m(C) + M \text{ and } s(C') = s(C) - M$.
\end{proposition}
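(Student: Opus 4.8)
The plan is to iterate the column-grouping operation of Lemma \ref{lem:grouping_col} until the bottom structure of Lemma \ref{lem:bottom_structure} collapses to a single trapeze region, and then to carry out one final reduction that eliminates the remaining lozenge region $L[b_1, n_0, (0,0)]$ at the left. At each application of Lemma \ref{lem:grouping_col} to the current color map, we pass from $p$ to $p-1$ trapeze blocks while converting exactly $r_p \times b_p$ edges of color $3$ into edges of color $m$ (and changing nothing else in the global count of $3$ and $m$ edges, since arrow reversals and replacements are the only moves and a replacement is a $+1$/$-1$ swap between $m$ and $3$). Applying this $p(C)-1$ times yields a color map $C_1$ with $p(C_1)=1$; the number of color-$3$ edges removed along the way is $\sum_{i=2}^{p} r_i b_i$, where the $r_i, b_i$ are the block sizes of the successive reductions. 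I would then perform the last step, which (using Lemma \ref{lem:lozenge_filling}) handles $L[b_1, n_0, (0,0)]$: since after collapsing $p=1$ the single trapeze is $T[n_0, n_0, (x_1,0)]$ with $x_1 = b_1$, the same replacement-plus-arrow-reversal mechanism as in Lemma \ref{lem:grouping_col} (applied with the role of $r_{p-1}$ played by the all-$0$ edges on $\partial_2^{(n)}$) turns the $b_1 n_0$ lozenges of color $3$ in $L[b_1,n_0,(0,0)]$ into $m$ edges and produces $C'$ with $b_1(C')=0$, $p(C')=1$.

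The key computational step is then to identify $M$, the total number of color-$3$ edges removed, with $n_0 n_1 - G(C,0)$. Here is how I would do the bookkeeping. In the bottom region of Lemma \ref{lem:bottom_structure}, the trapeze region $T[r_i, \cdot, (x_i,0)]$ contributes the $r_i$ consecutive $0$-edges on $\partial_0^{(n)}$ occupying heights in a block, and the lozenge region $L[b_i, \cdot, (y_{i-1},0)]$ contributes $b_i$ consecutive $1$-edges on $\partial_0^{(n)}$. So reading $\partial_0 C$ from the left (from the bottom-left corner, i.e.\ from height $0$), it is the word $1^{b_1} 0^{r_1} 1^{b_2} 0^{r_2} \cdots 1^{b_p} 0^{r_p}$, with $\sum b_i = n_1$ and $\sum r_i = n_0$. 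For such a word, $G(C,0) = \sum_{e: C(e)=0} n(C,e)$ counts, for each $0$-edge, the number of $1$-edges strictly below it (smaller height), which is $\sum_{i=1}^{p} r_i (b_1 + \cdots + b_i)$. The number of color-$3$ edges that get removed in the full reduction — the $b_1 n_0$ from the last step plus $\sum_{i=2}^p r_i b_i$ rewritten appropriately after tracking how block sizes evolve — should telescope to $n_0 n_1 - \sum_{i=1}^p r_i(b_1 + \cdots + b_i)$. Concretely, $n_0 n_1 = (\sum r_i)(\sum b_j) = \sum_{i} r_i \sum_j b_j$, and subtracting $\sum_i r_i (b_1 + \cdots + b_i)$ leaves $\sum_i r_i (b_{i+1} + \cdots + b_p)$, which is exactly the count of color-$3$ lozenges sitting strictly to the right of (and getting cleared past) each trapeze block during the successive groupings. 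I would verify this identification by induction on $p$, using the recursion from Lemma \ref{lem:grouping_col} that merges block $p$ into block $p-1$ at the cost of $r_p b_p$ color-$3$ edges, together with the fact that after merging, the new rightmost block has size $r_{p-1}+r_p$ and the "$b$'' parameters to its left are unchanged.

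The equalities $m(C') = m(C) + M$ and $s(C') = s(C) - M$ are then immediate, since every move used (arrow reversal, replacement at a vertex) either preserves both $m(\cdot)$ and $s(\cdot)$ or transfers one unit from $s$ to $m$, and exactly $M$ such transfers occur. The main obstacle I anticipate is the combinatorial bookkeeping in the inductive step: one must check that the block sizes $(b_i, r_i)$ appearing in Lemma \ref{lem:grouping_col} at each stage are consistent with the original boundary word $\partial_0 C$ (the $b_i$'s never change, only consecutive $r_i$'s merge), that the arrow reversals used in successive stages do not interfere — which is exactly the compatibility already noted at the end of the proof of Lemma \ref{lem:bottom_structure}, namely that a reversal in $T[a,b,x]$ or in the corresponding lozenge region only touches edges $e$ with $e_1 \geq x_1$ — and that the final step genuinely removes $b_1 n_0$ color-$3$ edges rather than some smaller number. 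Once these are in place, matching $M$ with $n_0 n_1 - G(C,0)$ is the pure-arithmetic identity $\sum_i r_i(b_{i+1}+\cdots+b_p) = (\sum_i r_i)(\sum_j b_j) - \sum_i r_i(b_1+\cdots+b_i)$.
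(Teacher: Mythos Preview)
Your strategy is exactly the paper's: iterate Lemma~\ref{lem:grouping_col} down to $p=1$, then clear the remaining $L[b_1,n_0,(0,0)]$, and match the accumulated replacement count against $n_0n_1-G(C,0)$ by reading off the block decomposition $1^{b_1}0^{r_1}\cdots 1^{b_p}0^{r_p}$ of $\partial_0 C$.

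However, your bookkeeping has an orientation error that makes the two key formulas come out swapped. On $\partial_0^{(n)}$ the edge of type~$0$ with origin $(n-r+1,0)$ has height $e_0 = r-1$, so height increases from \emph{right to left}: the bottom-left corner is at height $n-1$, not height~$0$, and ``east'' (smaller height) means to the right. Hence for a $0$-edge in the $j$-th trapeze block the $1$-edges of smaller height are those to its \emph{right}, giving
\[
G(C,0)=\sum_{j=1}^p r_j\Bigl(n_1-\sum_{i\le j}b_i\Bigr),
\]
not $\sum_j r_j(b_1+\cdots+b_j)$. (Note also that one cannot assume $\sum_{i=1}^p b_i=n_1$: there may be a trailing block of $1$'s beyond $y_p$, so the ``complement'' must be written as $n_1-\sum_{i\le j}b_i$.) Conversely, the telescoped replacement count is
\[
M \;=\; r_pb_p+(r_{p-1}+r_p)b_{p-1}+\cdots+(r_2+\cdots+r_p)b_2+n_0b_1
\;=\;\sum_{j=1}^p r_j\!\sum_{i\le j} b_i,
\]
which is the expression you wrote down for $G(C,0)$, not $n_0n_1$ minus it. With both quantities corrected, the identity $M=n_0n_1-G(C,0)$ is the one-line subtraction you describe, and the rest of your plan (arrow reversals preserve $m,s$; replacements shift one unit from $s$ to $m$; the $b_i$'s are stable under merging; the final step costs $b_1 n_0$) goes through unchanged and matches the paper's proof.
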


\begin{proof}
    Apply the transformation of Lemma \ref{lem:grouping_col} until $p=1$. 
    Let us compute the total number $M$ of exchanged $3$ and $m$ colored edges in the process. 
    The only tranformation that changes the number of edges of color $3$ and $m$ is the replacement 
    as in Figure \ref{fig:replacement}. By Lemma \ref{lem:grouping_col}, one has applied 
    \begin{equation*}
        \tilde{M} = r_pb_p + b_{p-1}(r_{p-1}+r_p) + \dots + b_2(r_2 + \dots + r_p) 
        = \sum_{j=2}^p r_j \sum_{i=1}^j b_i - b_1 \sum_{j=2}^p r_j
    \end{equation*}
    replacements of $3$-colored edges by the same number of $m$-colored edges. Moreover, 
    \begin{equation*}
        G(C, 0) =  \sum_{j=1}^p r_j (n_1 - \sum_{i=1}^j b_i) = (n_1 - b_1)\sum_{j=1}^p r_j - \tilde{M} = (n_1 - b_1)n_0 - \tilde{M}.
    \end{equation*}
    If the resulting color map $\tilde{C}$ has $b_1(\tilde{C}) \geq 1$, apply Lemma \ref{lem:grouping_col} a number of times equal to 
    $n_0$ so that the obtained color map $C'$ has $x_1(\tilde{C}) = 0$. This last step removed $b_1(C)n_0 = b_1(\tilde{C})n_0$ edges of 
    color $3$ from $\tilde{C}$ and added the same number of edges of color $m$. Therefore, one has removed 
    $M = \tilde{M} + b_1n_0 = n_0n_1 - G(C, 0)$ edges of color $3$ and added the same number of edges of color $m$.
\end{proof}

\begin{definition}[Reduced color map]
    A color map $C: E_n \rightarrow \{ 0,1,3, m \}$ such that $G(C, 2) = 0$, $p(C) = 1$ and $b_1(C) = 0$
    is called a reduced color map.
\end{definition}

\subsection{Structure of reduced color maps}
\label{subsec:case_p_1}

In this section, we only consider reduced color maps as any color map $C$ such that $G(C, 2) = 0$ 
can be reduced thanks to Proposition \ref{prop:reduction_partial_0} above. We first show that most of the edges in reduced color map 
have their color fixed except in some region, see Figure \ref{fig:region_r}. 
This region consists of specific configuration of edges described in Remark \ref{rem:lozenge_in_R}.
From this we derive the main result of this section in Proposition \ref{prop:nb_m_reduced} which gives the number 
of $m$ and $3$ colored edges in reduced color map. \\
By Lemma \ref{lem:trapeze_filling},
every reduced color map has every edge of its trapeze region $T[n_0, n_0, (0, 0)]$ colored $0$. 
Let 
$R = (E_n \setminus T[n_0, n_0, (0, 0) ]) \cup \{ (n_0 + s\xi^2, n_0 + (s+1) \xi^2), 0 \leq s \leq n_0 -1  \} $ 
be the remaining region pictured in Figure \ref{fig:region_r}.

\begin{figure}[H]
    \centering
    \includegraphics[scale=0.7]{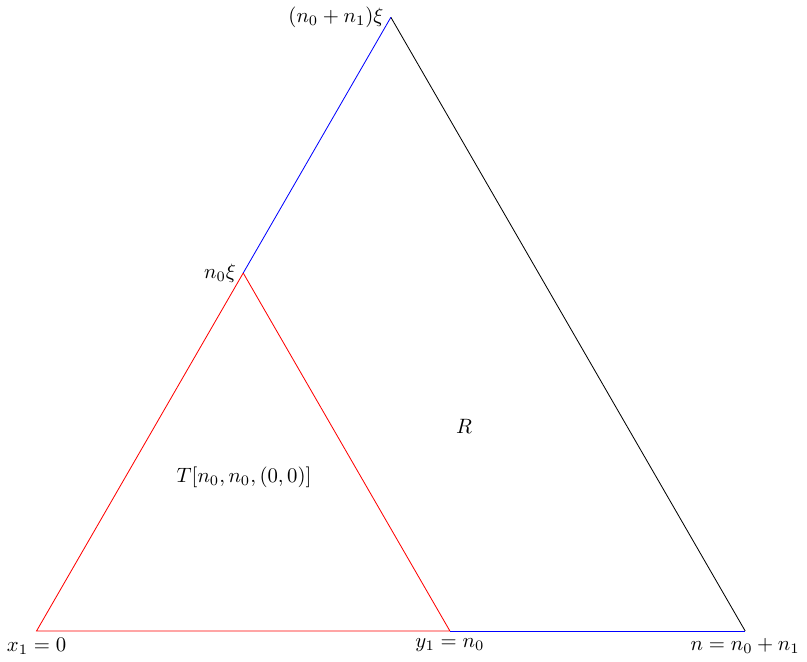}
    \caption{The region $R$ in a reduced color map. Edges outside $R$ have their color fixed.}
    \label{fig:region_r}
\end{figure}

\noindent
The next Lemma shows that edges of color $0$ in $R$ can only be of type $1$.

\begin{lemma}[Edges of color $0$ in $R$]
\label{lem:pieces_in_R}
    Let $R$ be the region above associated a to reduced color map $C$. Then, every edge of color $0$ in $R$ is of type $1$. 
\end{lemma}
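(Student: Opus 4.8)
The plan is to analyze a $0$-colored edge $e$ in $R$ by looking at the triangular face it borders on the side away from the trapeze region $T[n_0,n_0,(0,0)]$, and to derive a contradiction if $e$ has type $0$ or type $2$. Recall that $R$ consists of the edges not in $T[n_0,n_0,(0,0)]$, together with the diagonal edges $(n_0+s\xi^2,n_0+(s+1)\xi^2)$ for $0\le s\le n_0-1$ bounding $T$ from the upper right. The key structural fact I will use is that the full boundary of a reduced color map along $\partial_0^{(n)}$, $\partial_1^{(n)}$ and $\partial_2^{(n)}$ is completely explicit: since $G(C,2)=0$, $p(C)=1$ and $b_1(C)=0$, all $0$ edges are consecutive starting at the bottom-left corner on $\partial_2$, the bottom side $\partial_0$ consists of $n_0$ zeros followed by $n_1$ ones read from the corner, and the slanted side $\partial_1$ inherits the forced pattern. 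In particular the $n_0$ boundary zeros all sit inside $T[n_0,n_0,(0,0)]$, and every $0$ edge meeting $R$ is an interior edge.

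First I would set up the propagation argument. Fix a $0$-colored edge $e\in R$ and suppose for contradiction that $t(e)\in\{0,2\}$. Using the allowed face configurations of a color map — each triangle has clockwise boundary colors in $\{(0,0,0),(1,1,1),(1,0,3),(0,1,m)\}$ up to cyclic rotation — I would examine both triangular faces incident to $e$. The crucial observation is that if a $0$ edge of type $0$ or type $2$ borders a face, the constraints force one of its neighboring edges to also be $0$ of a controlled type, or force a $3$-lozenge adjacent to it on a specific side; tracing this through, a type-$0$ or type-$2$ zero edge must be connected via faces of type $(1,0,3)$ or $(0,0,0)$ to a chain that eventually reaches the boundary of $T_n$. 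Since all $0$ edges incident to $R$ are interior, I would walk this chain "downward" (toward decreasing $e_0$, i.e. toward $\partial_0^{(n)}$) and show it must terminate at a boundary edge of $\partial_0^{(n)}$ or $\partial_2^{(n)}$, but the only place such a chain can land is inside the all-$0$ trapeze region $T[n_0,n_0,(0,0)]$, contradicting $e\in R\setminus T$ (more precisely, contradicting that the chain left $T$).

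The cleanest way to organize the chain argument is via the $0$-opening/arrow machinery of Section \ref{sec:arrows}: a $0$ edge of type $1$ together with a $0$ edge of type $0$ or type $2$ sharing a vertex forms a $0$-opening, and from the allowed-face list, a $0$ edge of type $0$ or type $2$ in $R$ cannot be "isolated" — it is always part of an opening or the terminal face of an arrow whose body consists of $3$-lozenges. I would argue that such an arrow, started at an opening inside $R$, can only close off by hitting the region $T$ along its diagonal boundary $\{(n_0+s\xi^2,n_0+(s+1)\xi^2)\}$, where the edge is $0$ of type $1$, not of type $0$ or $2$; chasing the types around shows the arrow's terminal direct/reverse faces are inside $T$, so the original type-$0$ or type-$2$ edge was inside $T$ too. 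This contradicts $e\in R$, except for the diagonal edges we explicitly adjoined to $R$, which are of type $1$ anyway and so are not a counterexample.

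The main obstacle I anticipate is the careful bookkeeping of edge types and orientations in the chain/arrow propagation: one must verify that at each step the type of the "next" $0$ edge is forced (type $1$ alternating with type $0$ or $2$) and that the chain strictly progresses toward $\partial_0^{(n)}$ so that it cannot cycle and must terminate. I would handle this by choosing a monotone quantity — for instance the coordinate $e_0$ of the origin, or the height $h(e)$ appropriately — that strictly decreases along the chain, guaranteeing termination, and then checking the handful of base cases (what the terminal face looks like when the chain reaches the boundary of $T_n$ or the diagonal boundary of $T$). Everything else — the enumeration of which face configurations a given $0$ edge of a given type can sit in — is a finite check against the four allowed triangles in the definition of a color map, combined with the already-established fact (Lemma \ref{lem:trapeze_filling}) that $T[n_0,n_0,(0,0)]$ is entirely $0$.
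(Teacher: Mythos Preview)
Your overall instinct---a descent/minimality argument tracking a $0$-colored edge of forbidden type through adjacent faces---is exactly what the paper does. But the execution has a real gap in the type~$2$ case, and a couple of incorrect structural claims that would derail the argument as written.

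First, two smaller issues. The coordinate $e_0$ does \emph{not} measure distance from $\partial_0^{(n)}$; in the paper's conventions $x_0 = n-(r+s)$, so decreasing $e_0$ moves toward $\partial_1^{(n)}$, not the bottom. The quantity you actually want (and the one the paper uses) is $x_2$. Second, your claim that in a reduced color map ``the slanted side $\partial_1$ inherits the forced pattern'' is false: the conditions $G(C,2)=0$, $p(C)=1$, $b_1(C)=0$ determine $\partial_0 C$ and $\partial_2 C$ but leave $\partial_1 C$ free (any $01$-string with $n_0$ zeros). In particular there \emph{are} $0$-colored boundary edges in $R$, namely on $\partial_1^{(n)}$; they are all of type~$1$, so they are not counterexamples to the lemma, but your sentence ``every $0$ edge meeting $R$ is an interior edge'' is wrong and cannot be used.

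The substantive gap is your endgame. You assert that a chain starting from a type-$0$ or type-$2$ zero edge, propagated via adjacent faces, ``can only close off by hitting the region $T$ along its diagonal boundary.'' For type-$0$ edges a downward (decreasing $x_2$) propagation does work, essentially as you sketch. But for a type-$2$ edge $(x,x+\xi)$, neither adjacent triangle contains an edge with strictly smaller $x_2$, so your monotone quantity does not decrease. What actually happens---and what the paper does---is that the face constraints together with minimality of $x_2$ force the adjacent direct triangle to be half of a $3$-lozenge, producing another type-$2$ zero edge at $x+1$ (same $x_2$, smaller $x_0$). Iterating pushes the edge \emph{rightward} until $x_0=1$, and the contradiction is obtained at $\partial_1^{(n)}$ (using only that $\partial_1$-colors lie in $\{0,1\}$), not at $T$. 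Your proposal never reaches $\partial_1^{(n)}$ and offers no mechanism to handle a type-$2$ edge sitting far from $T$, so as written the argument does not close. The arrow/opening machinery you invoke is also not the right tool here: arrows run along a fixed opening type and do not produce the rightward type-$2$ propagation you need.
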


\begin{proof}
    Assume for the sake of contradiction that there exists an edge $e^{(0)}$ of type $0$ or $2$ in $R$. 
    Take $e^{(0)}$ such that its origin $x^{(0)}$ has minimal coordinate $x_2$. 
    Since the color map is reduced, edges in $R \cap \partial_0^{(n)}$ have color $1$ so that $x_2 \geq 1$ for edges of type $0$. 
    If $e^{(0)}$ is of type $0$, then one of the edges of type $0$ with origins $x^{(0)} - \xi, x^{(0)} -\xi +1$ 
    or the edge of type $2$ with origin $x^{(0)} - \xi$ is colored $0$. 
    In either case, the minimality of the $x_2$ coordinate of $x^{(0)}$ is violated. 
    If $e^{(0)}$ is of type $2$, the type $2$ edge with center $x^{(0)} + 1$ is colored $0$ and both are 
    opposite edges of a lozenge with middle edge of type $1$ colored $3$ 
    as any other piece would either contradict the minimality of $x_2$ or introduce an edge of type $0$ and color $0$ in $R$.
    Without loss of generality, one can thus assume that $x^{(0)}_0 = 1$. 
    Since $\partial_1^{(n)}$ has edges with colors $0$ or $1$, the upward triangular face containing $e^{(0)}$ would have colors 
    $(0,0,0)$ or $(0,1,m)$ which would either imply that $R$ has an edge of type $0$ and color $0$ or 
    contradict the minimality of $x_2$.
\end{proof}


\begin{remark}[Lozenges in $R$]
\label{rem:lozenge_in_R}
    The two opposite $0$ colored edges of a either $3$ or $m$ lozenge have the same type. 
    Since Lemma \ref{lem:pieces_in_R} shows that $0$ colored edges in $R$ have type $1$, 
    the only orientations of $3$ and $m$ lozenges in $R$ are such that the $3$ edge has type $0$ and the $m$ edge has type $2$, 
    see Figure \ref{fig:region_R_loz}.  
\end{remark}

\begin{figure}[H]
    \centering
    \includegraphics[scale=1]{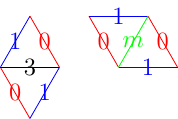}
    \caption{The two possible orientations of $3$ and $m$ lozenges in the region $R$.}
    \label{fig:region_R_loz}
\end{figure}

\noindent
We know from Lemma \ref{lem:pieces_in_R} that the color map $C$ in $R$ consists in triangular faces $f$ such that $C(f) = (1,1,1)$ 
together with either $3$ or $m$ lozenges oriented as in Figure \ref{fig:region_R_loz}. 
In the rest of this section, we will view the color map $C$ on $R$ as a configuration of paths of color $0$ 
from $R \cap \partial T[n_0, n_0, (0, 0)]$ to $R \cap \partial_2 C$ as follows. \\ 
To each $3$ or $m$ lozenge of Figure \ref{fig:region_R_loz}, associate a line segment by joining the two centers of the opposite $0$ 
colored edges. 
We define paths $(p_i, 1 \leq i \leq n_0)$ simultaneously. For each $1 \leq i \leq n_0$, the path $p_i$ starts in the middle of the 
edge of type $1$ in $R \cap \partial T[n_0, n_0, (0, 0)]$ with origin $(n_0, 0) + i \xi^2$. 
At each step, a path having its endpoint with coordinate $x_2$ is continued by the affine line segment in the adjacent lozenge in $R$ 
having an edge of color $0$ with origin of coordinate $x_2-1$.
Since this lozenge can only be one of the two lozenges of Figure \ref{fig:region_R_loz}, the paths are non-intersecting. 
After $n_1$ steps, endpoints are located in the middle of type $1$ edges of color $0$ on $R \cap \partial_2 C$. 
As there are $n_0$ such edges, the paths $(p_1, \dots, p_{n_0})$ form a set of non-intersecting paths 
where for each $1 \leq i \leq n_0$, the path $p_i$ has origin at 
$o_i = (n_0, 0) + (i-\frac{1}{2}) \xi^2$ and target $t_i = o(e_i) + \frac{1}{2} \xi^5$ where 
$e_1, \dots e_{n_0}$ are the $0$ colored edges on $\partial_1 C$ ordered such that $h(e_1) > h(e_2) > \dots > h(e_{n_0})$. \\
The paths $(p_i, 1 \leq i \leq n_0)$ can have two possible steps. 
We call the step induced by a $m$ lozenge a horizontal step and the step induced by a $3$ lozenge a vertical step in accordance 
with the red line segment joining the two opposite $0$ colored edges in the lozenges of Figure \ref{fig:region_R_loz}.

\begin{proposition}[Number of $m$ edges in reduced color maps]
\label{prop:nb_m_reduced}
    Let $C$ be a reduced color map. 
    Denote by $n(m, R)$ and $n(3, R)$ the respective number of horizontal and vertical steps in $R$. 
    Recall that $m(C)$ and $s(C)$ respectively denote the number of $m$ lozenges and $3$ lozenges in $C$. 
    Then, 
    \begin{equation}
    \label{eq:nb_hc_R}
        m(C) = n(m, R) = G(C, 1)
    \end{equation}
    and 
    \begin{equation}
    \label{eq:nb_sc_R}
        s(C) = n(3, R) = n_0n_1 - G(C, 1).
    \end{equation}
\end{proposition}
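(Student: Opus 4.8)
The plan is to exploit the non-intersecting path description of the color map on $R$ that was set up just before the statement. Since $C$ on $R$ consists only of $(1,1,1)$ triangular faces and of $3$/$m$ lozenges in the two orientations of Figure \ref{fig:region_R_loz}, the paths $(p_i)_{1 \leq i \leq n_0}$ carry \emph{all} the $3$ and $m$ edges of the reduced color map: outside $R$ every edge is forced, and by Lemma \ref{lem:trapeze_filling} the trapeze $T[n_0,n_0,(0,0)]$ is entirely $0$-colored and thus contains no lozenges. Hence $m(C) = n(m,R)$ and $s(C) = n(3,R)$, which gives the first equality in each display. It then remains to count horizontal steps along the paths.

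Next I would count steps by looking at a single path $p_i$. The path $p_i$ starts at $o_i = (n_0,0) + (i-\tfrac12)\xi^2$ on $R \cap \partial T[n_0,n_0,(0,0)]$ and, after exactly $n_1$ steps, ends at the midpoint of the $0$-colored edge $e_i$ on $\partial_1 C$, where $e_1,\dots,e_{n_0}$ are ordered by decreasing height. Each step is either horizontal (an $m$ lozenge, which advances the path in the direction transverse to $\partial_1^{(n)}$, i.e.\ changes the height coordinate along $\partial_1$) or vertical (a $3$ lozenge, which advances along the $\xi^2$ direction toward $\partial_2 C$ without changing the $\partial_1$-height). Each path takes the same total number $n_1$ of steps because the strip has width $n_1$ in the $\xi^2$ direction, and a horizontal step does not decrease $x_2$ while a vertical one does (by exactly $1$); so of the $n_1$ steps of $p_i$, exactly $n_1 - (\text{number of vertical steps})$ are horizontal, and the number of vertical steps of $p_i$ equals $n_1$ minus the number of horizontal steps. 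The key observation is that the number of horizontal steps of $p_i$ is precisely the offset between where $p_i$ \emph{would} exit $\partial_1 C$ if it went straight up and where it actually exits, which — because the paths are non-intersecting and there are exactly $n_0$ exit edges $e_1,\dots,e_{n_0}$ on $\partial_1 C$ — equals $n(C,e_i)$, the number of $1$-colored edges of $\partial_1^{(n)}$ of height below $h(e_i)$. Summing over $i$ gives
\begin{equation*}
    n(m,R) = \sum_{i=1}^{n_0} n(C, e_i) = \sum_{e \in \partial_1^{(n)}: C(e)=0} n(C,e) = G(C,1),
\end{equation*}
which is \eqref{eq:nb_hc_R}. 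Since each of the $n_0$ paths has exactly $n_1$ steps, the total number of steps is $n_0 n_1$, so $n(3,R) = n_0 n_1 - n(m,R) = n_0 n_1 - G(C,1)$, giving \eqref{eq:nb_sc_R}.

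The step I expect to be the main obstacle is making rigorous the claim that the number of horizontal steps of $p_i$ equals $n(C,e_i)$. This requires setting up coordinates on the paths carefully: a horizontal step changes the "column index" among the $n_1$ potential tracks (those tracks being indexed by heights along $\partial_1$), while a vertical step preserves it; one then argues that $p_i$ begins in track $i$ (counting from the corner, using that the starting points $o_i$ are consecutive along $R \cap \partial T[n_0,n_0,(0,0)]$) and ends in the track determined by $e_i$, and that the signed column displacement is nonnegative because the paths never cross and the tracks occupied by exit edges are exactly the $n_0$ positions of the $0$-colored edges on $\partial_1 C$. The non-crossing property (already established) plus a monotonicity argument for each individual step should close this, but it deserves a clean lattice-path reformulation rather than a picture-based argument.
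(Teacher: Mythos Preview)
Your proposal is correct and follows essentially the same route as the paper: both arguments observe that all $3$ and $m$ edges of a reduced color map lie in $R$, translate them into vertical/horizontal steps of the paths $p_i$, and then compare start and end positions of each $p_i$ to express the step counts in terms of the heights $h(e_i)$, hence in terms of $G(C,1)$. The only difference is bookkeeping: the paper computes the number of \emph{vertical} steps of $p_i$ explicitly as $n-h(e_i)-i$ via the Cartesian $y$-coordinate (the factor $\sqrt{3}/2$), then checks $n(C,e_i)=n_1-(n-h(e_i)-i)$ and sums; you instead argue directly that the number of \emph{horizontal} steps of $p_i$ equals $n(C,e_i)$. Your ``track/offset'' heuristic is exactly what the paper's coordinate computation makes rigorous, so the cleanest fix for the gap you flag is simply to do that computation rather than a separate monotonicity argument. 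One small inaccuracy to correct: your claim that a horizontal step does not decrease $x_2$ while a vertical one does is not the right invariant---in the paper's conventions, horizontal steps keep the Cartesian $y$-coordinate fixed and vertical steps increase it, which is why the count of vertical steps is read off from the $y$-difference between $t_i$ and $o_i$.
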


\begin{proof}
    The only region in $E_n$ where a reduced color map $C$ has $m$ colored edges is $R$. This amounts to count 
    the number of horizontal steps in any path configuration $(p_i, 1 \leq i \leq n_0)$. Recall that $e_1, \dots e_{n_0}$ are the 
    $0$ colored edges on $\partial_1 C$ ordered such that $h(e_1) > h(e_2) > \dots > h(e_{n_0})$. As each path $p_i$ goes from
    \begin{equation*}
        o_i = (n_0, 0) + (i-\frac{1}{2}) \xi^2 = \left( n_0 - \frac{1}{2}(i - \frac{1}{2}), (i - \frac{1}{2}) \frac{\sqrt{3}}{2} \right)
    \end{equation*}
    to 
    \begin{equation*}
        t_i = o(e_i) + \frac{1}{2} \xi^5 = (n, 0) + (n - h(e_i)) \xi^2 + \frac{1}{2} \xi^5 = 
        \left( n - \frac{1}{2} (n-h(e_i) - \frac{1}{2}), \frac{\sqrt{3}}{2} (n - h(e_i) - \frac{1}{2}) \right),
    \end{equation*}
    the number of vertical steps in $p_i$ is given by
    \begin{equation*}
        \frac{2}{\sqrt{3}} \cdot \left( \frac{\sqrt{3}}{2} (n - h(e_i) - \frac{1}{2}) - (i - \frac{1}{2}) \frac{\sqrt{3}}{2} \right) = n - h(e_i) - i 
    \end{equation*}
    so that the total number of vertical steps in the path configuration is 
    \begin{equation*}
        n(3, R) = \sum_{i = 1}^{n_0} (n - h(e_i) - i).
    \end{equation*}
    Moreover, the number of $1$ colored edges $e'$ such that $h(e') < h(e_i)$ is given by
    \begin{equation*}
        n(C, e_i) = n_1 - (n - h(e_i) - i)
    \end{equation*}
    so that
    \begin{equation*}
        G(C, 1) = \sum_{i = 1}^{n_0} n(C, e_i) = \sum_{i = 1}^{n_0} (n_1 - (n - h(e_i) - i )) = n_0n_1 - n(3, R). 
    \end{equation*}
    Therefore, 
    \begin{equation*}
        n(3, R) = n_0n_1 - G(C, 1)
    \end{equation*}
    and, since each of the $n_0$ paths has a total number of steps given by $n_1$,
    \begin{equation*}
        n(m, R) = n_0n_1 - n(3, R) = G(C, 1).
    \end{equation*}
\end{proof}

\begin{remark}[Number of reduced color maps]
    One can derive the number of reduced color maps $N_{red}(n_0, n_1)$ since this number is equal to the number of paths 
    configurations $(p_i, 1 \leq i \leq n_0)$ which can be computed by the determinantal 
    formula of Lindström, Gessel and Viennot \cite{Lindstrom}, \cite{Gessel_Viennot}:
    \begin{equation}
        N_{red}(n_0, n_1) = \det A
    \end{equation}
    where $A = (a_{i,j}, 1 \leq i,j \leq n_0)$ is the matrix whose coefficients are given by 
    \begin{equation}
        a_{i,j} = \binom{n_1}{ n - h(e_j) - i }
    \end{equation}
    with the convention that $a_{i,j} = 0$ if $h(e_j) + i \geq n$.
\end{remark}

\subsection{Proof of Theorem \ref{th:nb_hard_crossings} in the case $G(C, 2) = 0$}
\label{subsec:proof_g_zero}

Let $C$ be a color map such that $G(C, 2) = 0$. 
Thanks to Proposition \ref{prop:reduction_partial_0}, one can reduce $C$ to a reduced color map $C'$ such that 
\begin{equation}
    m(C') = m(C) + n_0n_1 - G(C, 0)
\end{equation}
and 
\begin{equation}
    s(C') = s(C) - n_0n_1 + G(C, 0)
\end{equation}
Using \eqref{eq:nb_hc_R} and \eqref{eq:nb_sc_R} for the reduced color map $C'$, 
\begin{align}
    m(C') &= G(C', 1) \\
    s(C') &= n_0n_1 - G(C', 1).
\end{align}
Moreover, the reduction $C \mapsto C'$ does not change $\partial_1 C = \partial_1 C'$ so that $G(C, 1) = G(C', 1)$. 
Thus,
\begin{equation}
    m(C) = G(C, 0) + G(C, 1) - n_0n_1
 \end{equation}
which is \eqref{eq:hc_count_general} since $G(C, 2) = 0$ and
\begin{equation}
        s(C) = s(C') - G(C, 0) + n_0n_1 = 2n_0n_1 - G(C, 0) - G(C, 1) = n_0n_1 - m(C)
\end{equation}
which is \eqref{eq:sc_count_general}.

\section{The general case}
\label{sec:general_case}

In this section, we prove \eqref{eq:hc_count_general} by induction on $G(C, 2)$. 
The case where $G(C, 2) = 0$ has been treated in Section \ref{subsec:proof_g_zero}. 
We first introduce a procedure in Section \ref{subsec:gash_prop} which takes a color map $C$ for which $G(C, 2) \geq 1$ 
and transform it to a color map $C'$ such that $G(C', 2) = G(C, 2) - 1$. Using the previous transform, we finish the proof of 
Theorem \ref{th:nb_hard_crossings} in Section  \ref{subsec:proof_general}.

\subsection{Gash propagation}
\label{subsec:gash_prop}

We introduce local configuration of two edges of the same type sharing a vertex called a gash, 
see Definition \ref{def:gash}, which is inspired from gashes previously defined in \cite{puzzle_conj_two_step}, 
\cite{Buch_mutations} and \cite{françois2024positiveformulaproductconjugacy}. Gashes will propagate across a color map $C$ 
by local rules presented in Definition \ref{def:gash_propag_2} until reaching some prescribed configuration or hitting 
$\partial_1^{(n)}$.

\begin{definition}[Gash]
\label{def:gash}
    Let $x \in T_n$. A gash with center $x$ is the union of the two edges $(x, x - \xi^{2l}), (x + \xi^{2l}, x)$ for $l \in \{1,2 \}$ with the data of 
    \begin{enumerate}
        \item Original colors given by   \begin{align*}
        C((x, x - \xi^{2l})) = 1, \ C((x + \xi^{2l}, x)) = 0 &\text{ if } l = 1 \\
        C((x, x - \xi^{2l})) = 0, \ C((x + \xi^{2l}, x)) = 1 &\text{ if } l = 2.
    \end{align*}
    \item New colors given by replacing $0$ with $1$ and vice-versa in original colors.
    \end{enumerate}
    The type of a gash is defined as the type $l \in \{1,2 \}$ of its edges.
\end{definition}

\noindent
Let $g$ be gash of type $2$. The only possible values of the color map $C$ adjacent to $g$ are given by the configurations of Figure 
\ref{fig:gash_configurations} that we label from $(i)$ to $(vi)$. The configurations can be rotated for type $1$ gashes. 

\begin{figure}[H]
    \centering
    \includegraphics[scale=0.8]{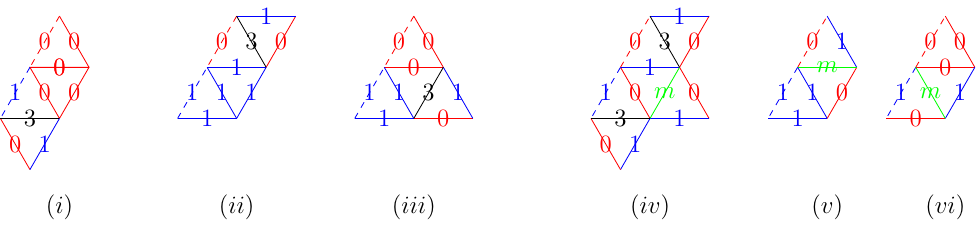}
    \caption{Possible  adjacent configuration to a gash of type $2$ in dashed edges. Only the original colors of gashes are represented.}
    \label{fig:gash_configurations}
\end{figure}

\begin{definition}[Gash propagation]
\label{def:gash_propag_2}
    Let $g$ be gash of type $2$ with center $x$ adjacent to a configuration $(i), (ii) $ or $(iii)$. We define the propagation 
    of $g$ to be the gash $g'$ having center $x + \overline{\xi}$ (resp. $x+1$) in the case of configuration 
    $(i)$ (resp. $(ii)$ or $(iii)$) together with the local replacement of Figure \ref{fig:gash_prop} depending on the adjacent configuration.
\end{definition}

\begin{figure}[ht]
    \centering
    \includegraphics[scale=0.8]{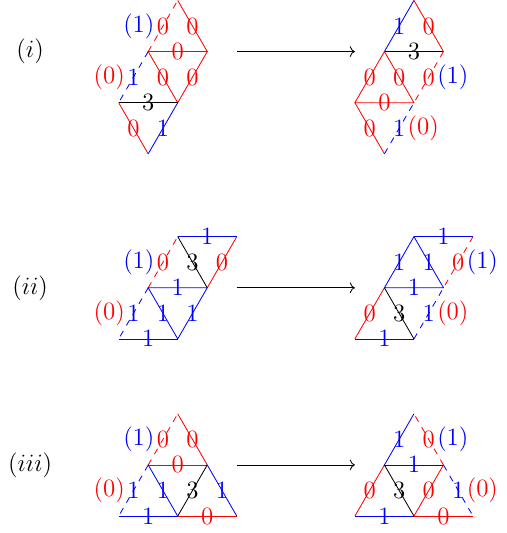}
    \caption{Propagation of a gash through configurations $(i)$, $(ii)$ and $(iii)$. New colors of gashes are written in parenthesis.}
    \label{fig:gash_prop}
\end{figure}

\noindent
If $g$ is adjacent to a configuration $(iv)$, notice that there is a $0$ opening at its center $x$ and thus an arrow of color $0$ at $x$ 
with type $0$. Reverting this arrow yields a configuration $(i)$ adjacent to $g$ and we define the propagation of $g$ to be the gash $g'$ 
of the same type as in step $(1)$. Using a rotation, one defines propagations for type $1$ gashes with the exception of configuration $(iii)$ 
where the propagated gash is the gash of type $2$ with center $x' = x+1$.

\begin{definition}[Propagation algorithm]
\label{def:propagation_algorithm}
    Define the following algorithm.
    \begin{enumerate}
    \item[\textbf{Input:}] A color map $C$ and a gash $g$ of type $l \in \{1,2\}$.
    \item Set $g^{(0)} = g$, $x^{(0)} = x(g)$, $t^{(0)} = t(g)$.
        \item \textbf{WHILE} $g^{(s)}$ is adjacent to $(i), (ii), (iii)$ or $(iv)$: set $g^{(s+1)}$ to be the propagation of $g^{(s)}$ with 
        center $x^{(s+1)}$ and type $t^{(s+1)}$.
    \end{enumerate}
\end{definition}

\begin{proposition}[Gash propagation]
\label{prop:gash_propagation}
    Let $g$ be a gash of type $2$ on $\partial_2^{(n)}$. The propagation algorithm terminates at a gash $g'$ adjacent to configuration 
    of type $(v)$, $(vi)$ or on $ \partial_n^{(1)} $. 
\end{proposition}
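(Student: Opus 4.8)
The plan is to establish two facts: (a) the algorithm terminates after finitely many steps, and (b) when it terminates, the final gash $g'$ is either adjacent to a configuration of type $(v)$ or $(vi)$, or lies on $\partial_1^{(n)}$. Fact (b) is almost immediate from the structure of Definition \ref{def:propagation_algorithm}: the \textbf{WHILE} loop continues exactly as long as $g^{(s)}$ is adjacent to one of $(i),(ii),(iii),(iv)$. By the case analysis preceding Definition \ref{def:gash_propag_2}, the six configurations $(i)$--$(vi)$ are the \emph{only} possible local environments of a gash of type $2$ (and their rotations for type $1$), \emph{provided} the gash is not on the boundary $\partial_1^{(n)}$, where some of the required adjacent edges do not exist. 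Hence if the loop halts and $g'$ is not on $\partial_1^{(n)}$, it must be adjacent to $(v)$ or $(vi)$. So the real content is the termination statement (a).

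For termination, the natural approach is to exhibit a quantity that strictly decreases (or is bounded and eventually forces a halt) along the sequence $g^{(0)}, g^{(1)}, \dots$. The key observation is that each propagation step moves the center $x^{(s)}$ of the gash: inspecting Figure \ref{fig:gash_prop} and the definition, in configuration $(i)$ the center moves from $x$ to $x + \overline{\xi}$, and in configurations $(ii), (iii)$ (and the arrow-reversal case $(iv)$, which reduces to $(i)$) it moves from $x$ to $x+1$. Both $\overline{\xi} = \xi^5$ and $1$ are directions that, relative to the coordinate $x_1$ (equivalently, moving away from $\partial_2^{(n)}$ toward $\partial_1^{(n)}$), are non-decreasing, and at least one of the three coordinates strictly increases at every step. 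More precisely, I would track the coordinate that measures distance from $\partial_1^{(n)}$: since $x+1$ and $x+\overline\xi$ each strictly decrease the $x_1$-type distance to the side $\partial_1^{(n)}$ (the edge of type $1$ in $T_n$ has increment $-\xi^2$, and both $1$ and $\overline\xi$ have positive component in that coordinate's decreasing direction), this quantity is a strictly decreasing non-negative integer. Therefore the loop can run at most $n$ times before the gash either reaches $\partial_1^{(n)}$ or gets stuck at a configuration $(v)$ or $(vi)$. One subtlety: a type $2$ gash may propagate into a type $1$ gash (via configuration $(iii)$) and conversely; I must check that the chosen monovariant decreases regardless of the current type, which follows because the two move-directions $1$ and $\overline\xi$ behave symmetrically with respect to the boundary $\partial_1^{(n)}$.

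The main obstacle I anticipate is the arrow-reversal case (configuration $(iv)$): here the propagation first performs an arrow reversal to convert the local environment to $(i)$, and only then moves the center. I need to argue that this arrow reversal is well-defined (the arrow has finite length, which is guaranteed by the remark after the \emph{Arrow} definition that only finitely many successive openings occur before $C(e'') = c$) and, crucially, that it does not interfere with the monovariant — i.e. the arrow reversal changes colors only \emph{inside} the arrow, at edges whose relevant coordinate is not smaller than that of $x$, so it cannot resurrect a configuration that was already "passed." This locality of arrow reversals is exactly the kind of statement used repeatedly in Lemmas \ref{lem:trapeze_filling} and \ref{lem:grouping_col}, so I would invoke the same reasoning. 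Once termination and the exhaustiveness of the configuration list are in hand, Proposition \ref{prop:gash_propagation} follows directly.
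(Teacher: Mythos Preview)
Your proof plan matches the paper's approach: a monovariant on the gash center's coordinates forces termination, and the exhaustive list of local configurations $(i)$--$(vi)$ handles the halting case. There is, however, one concrete slip in your choice of monovariant. The quantity ``distance to $\partial_1^{(n)}$'' is the coordinate $x_0$, and the move through configuration $(i)$ for a type-$2$ gash sends $x \mapsto x + \overline{\xi}$, which leaves $x_0$ unchanged (it increases $x_1$ and decreases $x_2$ instead); so $x_0$ alone is not strictly decreasing. The paper deals with this by observing that at every step one has \emph{either} $x_0^{(s+1)} < x_0^{(s)}$ \emph{or} $x_1^{(s+1)} > x_1^{(s)}$, depending on the configuration and the current type; equivalently, $x_0 - x_1$ is a strict integer monovariant bounded below by $-n$. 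With this adjustment your argument goes through, and your extra remarks on the arrow-reversal step $(iv)$ are correct but not needed for the termination count, since the reversal changes only edge colors and the subsequent $(i)$-move still advances the monovariant.
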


\begin{proof}
    One checks that propagations of Definition \ref{def:gash_propag_2} do not change the type of the gash except 
    in the case of a configuration $(iii)$ which turns a gash of type $2$ into a gash of type $1$ and vice-versa. 
    Since the starting gash $g$ has type $2$, the gashes have type either $1$ or $2$ along the propagation. 
    At each step of the gash propagation, one has either $ x^{(s+1)}_0 < x^{(s)}_0$ or $x^{(s+1)}_1 > x^{(s)}_1$ and 
    $t^{(s)} \in \{ 1,2 \}$ which implies that the while loop terminates on a gash $g^{(\infty)}$ which is adjacent 
    to a configuration $(v)$ or $(vi)$ or necessarily of type $1$ on $\partial_n^{(1)}$.
\end{proof}

\noindent
In the case where a gash is adjacent to a configuration $(v)$ or $(vi)$, one still wants to replace the original colors by the new ones. 
To do so, we introduce a local transformation called gash removal in Definition \ref{def:gash_removal}.

\begin{definition}[Gash removal in $(v)$ and $(vi)$]
\label{def:gash_removal}
    Let $C$ be a color map and let $g$ be a gash of type $2$ with center $x$ adjacent to a configuration $(v)$ or $(vi)$. 
    The removal of $g$ is the new color map $C'$ defined by
    \begin{align*}
        C'((x, x - \xi^4)) &= 1, C'((x + \xi^4, x)) = 0 \\
        C'((x+1, x)) &= 1, C'((x, x - \xi^2)) = 3 \text{ if } C((x+1, x)) = m \\
        C'((x+1, x)) &= 3, C'((x, x - \xi^2)) = 0 \text{ if } C((x+1, x)) = 0 \\
        C'(e) &= C(e) \text{ otherwise.} 
    \end{align*}
\end{definition}

\noindent
See Figure \ref{fig:gash_removal} for an illustration. Using rotation, one defines the gash removal for type $1$ gashes adjacent to a configuration $(v)$ or $(vi)$. 

\begin{figure}[H]
    \centering
    \includegraphics[scale=0.8]{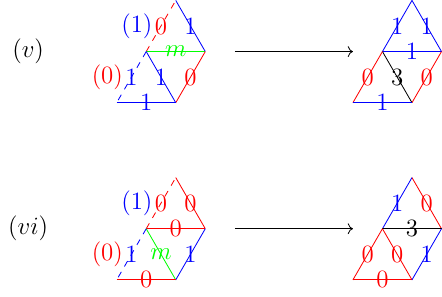}
    \caption{Gash removal in configurations $(v)$ and $(vi)$.}
    \label{fig:gash_removal}
\end{figure}

\subsection{Proof of Theorem \ref{th:nb_hard_crossings}}
\label{subsec:proof_general}

We are now in position to prove Theorem \ref{th:nb_hard_crossings} by induction on $G(C, 2)$. 
The case  $G(C, 2) = 0$ has been treated in Section \ref{subsec:proof_g_zero}. 
Assume that the identities \eqref{eq:hc_count_general} and \eqref{eq:sc_count_general} hold 
for color maps $C$ such that $G(C, 2) \leq N$ and consider a color map $C$ such that $G(C, 2) = N+1$. 

Since $G(C, 2) \geq 1$, there exists a pair of edges $(x, x - \xi^{4}), (x + \xi^{4}, x) \in (\partial_n^{(2)})^2$ 
such that $C((x, x - \xi^{4})) = 0, \ C((x + \xi^{4}, x)) = 1$ for some $x \in T_n$. Let $g$ be the gash on 
$\partial_n^{(2)}$ with center $x$, original colors as above and new colors given by 
$C((x, x - \xi^{4})) = 1, \ C((x + \xi^{4}, x)) = 0$ as in Definition \ref{def:gash}. 
Applying the propagation algorithm of Definition \ref{def:propagation_algorithm} and using 
Proposition \ref{prop:gash_propagation} yields a gash $g'$ adjacent to a configuration $(v)$, $(vi)$ 
or on $\partial_n^{(1)}$. In the case of configurations $(v)$ or $(vi)$, apply the gash removal of 
Definition \ref{def:gash_removal}. In the case where $g' \in \partial_n^{(1)}$, replace the original 
colors by the new ones so that the $0$ and $1$ colors are swapped. Call $C'$ the resulting color map. Then, 
\begin{align}
    G(C', 2) &= G(C, 2) - 1 = N \\
     G(C', 0) &= G(C, 0)
\end{align}

\noindent
In the case where one used gash removal, 
\begin{equation}
    G(C', 1) = G(C, 1), \ \ m(C') = m(C) - 1, \text{ and } s(C') = s(C) + 1.
\end{equation}
whereas in the case where $g' \in \partial_n^{(1)}$,
\begin{equation}
    G(C', 1) = G(C, 1) +1, \ \ m(C') = m(C), \text{ and } s(C') = s(C).
\end{equation}
In both cases, applying the induction hypothesis to $C'$ gives 
\begin{align}
    m(C') &= G(C', 0) + G(C', 1) + G(C', 2) - n_0n_1 \\
    s(C') &= 2n_0n_1 - G(C', 0) - G(C', 1) - G(C', 2)
\end{align}
which gives
\begin{align}
    m(C) &= G(C, 0) + G(C, 1)  + G(C, 2) - n_0n_1 \\
    s(C) &= 2n_0n_1 - G(C, 0) - G(C, 1) - G(C, 2)
\end{align}
as desired.
\\
\\
We finally state a Corollary of the main result which counts the number of faces having all of their edges of the same color, either $0$ or $1$.

\begin{corollary}[Number of triangular pieces]
\label{cor:triangular_faces_count}
 Let $C$ be a color map. Let $n^{(j)}, n_{(j)}, j \in \{ 0, 1 \}$ denote respectively the number of direct and reverse triangular faces $f \in F_n$ having all their edges of color $j$. Then,
    \begin{equation}
    \label{eq:nb_triangular_faces}
        n^{(j)} = \frac{n_j(n_j+1)}{2} \text{ and } n_{(j)} = \frac{n_j(n_j-1)}{2}.
    \end{equation}
\end{corollary}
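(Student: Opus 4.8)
The plan is to count triangular faces of each monochromatic color by exploiting the fact that this count, like $m(C)$ and $s(C)$, should depend only on the boundary data — and in fact only on $n_0$ and $n_1$, not even on the gash numbers. The cleanest route is to reduce to a configuration where the count is completely transparent, which is exactly what the machinery of Sections \ref{sec:g_zero_case} and \ref{sec:general_case} already provides: every color map can be transformed, by arrow reversals, gash propagations, gash removals, and replacements, into a reduced color map on the same boundary-size parameters $(n_0,n_1)$. So the first step is to check that none of these transformations changes the quantities $n^{(j)}$ and $n_{(j)}$.

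First I would go through the local moves one at a time. An arrow reversal is a rotation by $\pi$ of an arrow configuration; an arrow of length $r$ consists of $r$ pairs of $3$/$m$ lozenges plus one pair of a direct and a reverse monochromatic face of color $c$, and the rotation sends this configuration to a congruent one, so it preserves the multiset of tiles and in particular the number of monochromatic direct and reverse faces of each color. The replacement of Figure \ref{fig:replacement} exchanges a $3$ lozenge for an $m$ lozenge and does not touch any monochromatic triangular face. Gash propagation through configurations $(i)$, $(ii)$, $(iii)$ and the gash removal in $(v)$, $(vi)$: here I would inspect Figures \ref{fig:gash_prop} and \ref{fig:gash_removal} and verify that in each local picture the number of all-$0$ and all-$1$ faces on the left equals that on the right (the propagation moves the gash but only rearranges $3$/$m$ lozenges and mixed faces around it; no monochromatic face is created or destroyed). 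Finally, swapping original for new colors of a gash that reached $\partial_n^{(1)}$ only recolors two boundary edges and, by the color map axioms, forces a consistent relabeling along a corridor of lozenges and mixed faces — again no monochromatic triangle changes. Hence $n^{(j)}$ and $n_{(j)}$ are invariants of the whole reduction process.

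Next I would compute the two counts for a reduced color map. By Lemma \ref{lem:trapeze_filling} the trapeze region $T[n_0,n_0,(0,0)]$ is entirely $0$-colored; it is a triangle of side $n_0$, hence contains exactly $\frac{n_0(n_0+1)}{2}$ direct and $\frac{n_0(n_0-1)}{2}$ reverse faces, all monochromatic of color $0$, and no all-$0$ face lies outside it (Lemma \ref{lem:pieces_in_R} and Remark \ref{rem:lozenge_in_R} show the region $R$ has only all-$1$ faces and $3$/$m$ lozenges). Symmetrically — using the swap of $0$ and $1$ as in the Remark after Lemma \ref{lem:trapeze_filling}, applied now to the top corner adjacent to $\partial_1^{(n)}$ — the all-$1$ faces of a reduced color map form a triangular region of side $n_1$, giving $\frac{n_1(n_1+1)}{2}$ direct and $\frac{n_1(n_1-1)}{2}$ reverse ones. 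This establishes \eqref{eq:nb_triangular_faces} for reduced color maps, and by the invariance above it holds for all color maps.

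The main obstacle I expect is the bookkeeping in the previous paragraph: one must be sure that in a reduced color map the all-$1$ faces really do form a clean triangle of side $n_1$ in the corner near $\partial_1^{(n)}$, rather than being scattered through $R$. This needs a small structural argument dual to Lemmas \ref{lem:pieces_in_R}–\ref{rem:lozenge_in_R} — essentially that, after fixing the $0$-paths in $R$, the complementary $1$-region is forced — and a check that the $n_0$ non-intersecting paths of Proposition \ref{prop:nb_m_reduced}, which separate the $0$-trapeze from the rest, have all their bounded complementary region on the $1$-side equal to a triangle. Once that is in place, the two formulas fall out by the triangular-number identities. An alternative to the whole reduction argument, which I would keep in reserve, is a direct double count: each side of $T_n$ carries $n_0$ zero-edges and $n_1$ one-edges, and summing the three edge-colors around all faces together with the tile inventory (Figure \ref{fig:pieces_dual_hive}) and Theorem \ref{th:nb_hard_crossings} gives linear relations among $n^{(j)}$, $n_{(j)}$, $m(C)$, $s(C)$; combined with the obvious relation $n^{(j)}-n_{(j)}=n_j$ coming from a signed count along verticals, this pins down all four numbers.
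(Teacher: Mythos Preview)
Your approach is essentially the same as the paper's: verify that the local moves (gash propagation and removal, together with the arrow reversals and replacements of Section~\ref{sec:g_zero_case}) preserve the monochromatic triangle counts, then compute in a reduced color map. The obstacle you flag dissolves without needing the all-$1$ faces to form a triangle: since $R$ contains exactly $n_0 n_1$ lozenges (one per path step in Proposition~\ref{prop:nb_m_reduced}) and every remaining face in $R$ is all-$1$ by Lemma~\ref{lem:pieces_in_R}, subtracting the $\frac{n_0(n_0+1)}{2}+n_0 n_1$ direct faces lying in the trapeze and the lozenges from the $\frac{n(n+1)}{2}$ total direct faces of $T_n$ yields $\frac{n_1(n_1+1)}{2}$ immediately, and likewise for reverse faces.
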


\begin{proof}
    One can check that the gash propagation and removal steps preserve the number of faces $f$ having edge colors $(0,0,0)$ or $(1,1,1)$. 
    It therefore suffices to count them in a reduced color map which gives \eqref{eq:nb_triangular_faces}.
\end{proof}

\printbibliography

\end{document}